\newtheorem{thm}{Theorem}[section]
\newtheorem{lem}[thm]{Lemma}
\newtheorem{prop}[thm]{Proposition}
\newtheorem{thm-con}[thm]{Theorem-Conjecture}
\numberwithin{equation}{section}
\theoremstyle{definition}
\begin{document}
\title[Dembowski-Ostrom polynomials and reversed Dickson polynomials]{Dembowski-Ostrom polynomials and reversed Dickson polynomials}

\author[N. Fernando]{Neranga Fernando}
\address{Department of Mathematical Sciences,
	Carnegie Mellon University, Pittsburgh, PA 15213, USA}
\email{fneranga@andrew.cmu.edu}

\author[S. U. Hasan]{Sartaj Ul Hasan}
\address{Department of Mathematics, Indian Institute of Technology Jammu, Jammu 181221, India}
\email{sartaj.hasan@iitjammu.ac.in}

\author[M. Pal]{Mohit Pal}
\address{Department of Mathematics, Indian Institute of Technology Jammu, Jammu 181221, India}
\email{2018RMA0021@iitjammu.ac.in}

\begin{abstract}
We discuss the problem of classifying Dembowski-Ostrom polynomials from the composition of reversed Dickson polynomials of arbitrary kind  and monomials over finite fields of odd characteristic. Moreover, by using a variant of the Weil bound for the number of points of affine algebraic curves over finite fields, we discuss the planarity of all such Dembowski-Ostrom polynomials. Planar Dembowski-Ostrom polynomials have applications in many areas including cryptography and coding theory. 
 
\end{abstract}

\keywords{Finite fields, reversed Dickson polynomials, Dembowski-Ostrom polynomials, planar functions}

\subjclass[2010]{12E20, 11T55, 05A10, 11T06}

\maketitle

\section{Introduction}\label{S1}
Denote, as usual, by $\mathbb{F}_q$ the finite field with $q=p^e$ elements, where $p$ is an odd prime number and $e$ is a positive integer, and by $\mathbb F_q^*$ the multiplicative group of non-zero elements of $\mathbb F_q$. For any nonnegative integer $k$, the $k$-th Dickson polynomial of the first kind $D_{k}(X,a)$ over $\mathbb F_q$ was introduced by Dickson \cite{LED} in 1897, and is defined as follows $$D_{k}(X,a) := \sum_{i=0}^{\lfloor\frac k2\rfloor}\frac{k}{k-i}\dbinom{k-i}{i}(-a)^{i}X^{k-2i},$$ where $a\in \mathbb{F}_q$ is a parameter and $D_0(X,a)=2$. More than two decades later, Schur \cite{IS} introduced a variant of Dickson polynomial of the first kind in 1923, which is now known as Dickson polynomial of the second kind. For any nonnegative integer $k$, the $k$-th Dickson polynomial of the second kind $E_{k}(X,a)$ over $\mathbb F_q$ is defined as follows $$E_{k}(X,a) := \sum_{i=0}^{\lfloor\frac k2\rfloor}\dbinom{k-i}{i}(-a)^{i}X^{k-2i},$$ where $a\in \mathbb{F}_q$ is a parameter and $E_0(X,a)=1$. A trigonometric approach for Dickson polynomials has been recently considered by Lima and Panario~\cite{LP}. Dickson polynomials have also been used to study the $c$-differential uniformity of some functions over finite fields~\cite{HPRS}. Dickson polynomials of the first and second kind over $\mathbb F_q$ were studied extensively, especially with respect to their permutation behaviour. We recall that a polynomial $f \in \mathbb F_q[X]$ is a permutation polynomial over $\mathbb F_q$ if the associated mapping $X \mapsto f(X)$ is a bijection from $\mathbb F_q$ to $\mathbb F_q$. For a non-zero element $a$ in $\mathbb{F}_q$, N\"obauer \cite{WN} proved that the Dickson polynomial of the first kind $D_{k}(X,a)$ permutes the elements of $\mathbb{F}_q$ if and only if $(k, q^2-1)=1.$ However, except for a few cases, the permutation behaviour of Dickson polynomials of the second kind $E_{k}(X,a)$ remains unresolved. One may refer to the monograph \cite{Dickson-book} for more on Dickson polynomials.

The notion of $k$-th reversed Dickson polynomial (RDP) of the first kind was introduced by Hou, Mullen, Sellers and Yucas \cite{HMSY} by simply reversing the roles of the variable $X$ and the parameter $a$ in the $k$-th Dickson polynomial of the first kind $D_{k}(X,a)$. Moreover, the authors showed that the reversed Dickson polynomials of the first kind are closely related to what are known as almost perfect nonlinear (APN) functions. The permutation behaviour of RDPs is also an interesting area of research. Hou and Ly \cite{HL} gave necessary conditions for RDPs of the first kind to be permutation polynomials over $\mathbb{F}_{q}$.     

For any nonnegative integers $k$ and $m$, the notion of $k$-th Dickson polynomial of the $(m+1)$-th kind, denoted as $D_{k,m}(X,a)$, was introduced by Wang and Yucas \cite{WY}, and is defined as follows
\begin{equation}\label{E1.1}
D_{k,m}(X,a) := \sum_{i=0}^{\lfloor\frac k2\rfloor}\frac{k-mi}{k-i}\dbinom{k-i}{i}(-a)^{i}X^{k-2i},
\end{equation}
where $0 \leq m \leq p-1$, $a \in \mathbb{F}_q$ and $D_{0,m}(X,a)=2-m$. The $k$-th RDP of the $(m+1)$-th kind is also defined in a similar way by just reversing the role of the variable $X$ and the parameter $a$ in (\ref{E1.1}). More precisely, for any nonnegative integers $k$ and $m$, the $k$-th RDP of the $(m+1)$-th kind $D_{k,m}(a,X)$ is defined as follows
\begin{equation}\label{E1.1r}
D_{k,m}(a,X) := \sum_{i=0}^{\lfloor\frac k2\rfloor}\frac{k-mi}{k-i}\dbinom{k-i}{i}(-X)^{i}a^{k-2i},
\end{equation}
where $0 \leq m \leq p-1$, $a\in \mathbb{F}_q$ and $D_{0,m} (a,X) = 2-m$. The $k$-th RDP of the $(m+1)$-th kind also satisfies the following recurrence relation
\begin{equation}\label{E1.2}
D_{k,m}(a,X)=mE_{k}(a,X)-(m-1)D_{k}(a,X).
\end{equation}
It may be noted that the permutation behaviour of RDPs of the $(m+1)$-th kind has been studied by Fernando \cite{NF}.

A function $f: \mathbb{F}_q \rightarrow \mathbb{F}_q$ is called planar if the mapping $X \mapsto f(X+ \epsilon)-f(X)-f(\epsilon)$ induces a bijection from $\mathbb{F}_q$ to $\mathbb{F}_q$ for each $\epsilon \in \mathbb{F}_{q}^*.$ Since any function from a finite field $\mathbb{F}_q$ to itself can be represented by a polynomial of degree at most $(q-1)$, one may simply consider the planarity of polynomials over finite fields. It is clear from the definition of a planar function itself that there is no planar function in the even characteristic as $X$ and $X+\epsilon$ have the same image under the mapping $X \mapsto f(X+\epsilon) -f(X) - f(\epsilon).$ A polynomial $f \in \mathbb F_q[X]$ is called exceptional planar if it is planar over $\mathbb F_{q^e}$ for infinitely many $e$. Planar functions are very important due to their wide range of applications. For example, planar functions are used to construct finite projective planes \cite{DO}, relative difference sets \cite{GS} and error-correcting codes \cite{CDY}.

A Dembowski-Ostrom (DO) polynomial over finite field $\mathbb{F}_q$ is a polynomial that admits the following shape
$$\sum_{i,j}a_{ij}X^{p^i+p^j},$$ where $a_{ij} \in \mathbb{F}_q$. DO polynomials have been used in designing a public key cryptosystem known as HFE \cite{Patarin-1996}. Note that DO polynomials provide a very rich source of planar functions. It was conjectured by R{\'o}nyai and Sz{\"o}nyi \cite{RS} (see also \cite[Conjecture 9.5.19]{HB}) that all planar functions are of ``DO type". This conjecture is still open except  in the case of characteristic $3$ for which a counter example was given by Coulter and Matthews \cite{CMLB}. In 2010, Coulter and Matthews \cite{CM} classified DO polynomials from Dickson polynomials of the first and second kind and they also discussed the planarity of such DO polynomials.

In 2016, Zhang, Wu and Liu \cite{Zhang-Wu-Liu-2016} classified DO polynomials from RDPs of the first kind in the even characteristic case and they also characterized APN functions among all such DO polynomials. In this paper, we shall extend the results of Zhang, Wu and Liu \cite{Zhang-Wu-Liu-2016} to the odd characteristic case. In fact, we give a complete classification of  DO polynomials arising from the composition of RDPs of the $(m+1)$-th kind and the monomial $X^d$, where $d$ is a positive integer, in odd characteristic, and we further characterize planar functions among these DO polynomials. The motivation behind considering this composition actually stems from the known fact that the exceptional planar polynomials $X^{10}\pm X^6-X^2$ are essentially the composition of the Dickson polynomials $D_5(X, \pm1)$ and the monomial $X^2$. DO polynomials do not have any constant term. We shall, therefore, consider the polynomials $\mathfrak{\widehat D}_{k,m} := D_{k,m}(a,X^d)-D_{k,m}(a,0)$ for the purpose of classifying DO polynomials. Notice that $\mathfrak{\widehat D}_{k,m}$ is given by  $$\mathfrak{\widehat D}_{k,m} = \sum_{i=1}^{\lfloor\frac k2\rfloor}\frac{k-mi}{k-i}\dbinom{k-i}{i}(-X^d)^{i}a^{k-2i}.$$ 
For the sake of simplicity, we shall denote $\mathfrak{\widehat D}_{k,0}$, $\mathfrak{\widehat D}_{k,1}$, $\mathfrak{\widehat D}_{k,2}$, $\mathfrak{\widehat D}_{k,3}$, and $\mathfrak{\widehat D}_{k,4}$ by $\mathfrak{\widehat D}_{k}$, $\mathfrak{\widehat E}_{k}$, $\mathfrak{\widehat F }_{k}$, $\mathfrak{\widehat G}_{k}$ and $\mathfrak{\widehat H}_{k}$, respectively.  The paper has been organized as follows. In Section \ref{S2}, we state some lemmas that will be used in the subsequent sections. In Sections \ref{S3}, \ref{S4}, \ref{S5}  and \ref{S6}, we classify DO polynomials from $\mathfrak{\widehat D}_k$, $\mathfrak{\widehat E}_k$, $\mathfrak{\widehat G}_k$ and $\mathfrak{\widehat H}_k$, respectively. The case $m\geq5$ has been considered in Section \ref{S7}. In Section \ref{S8}, we consider the planarity of DO polynomials obtained in the previous sections. The complete list of DO polynomials derived from reversed Dickson polynomials is given in Appendix A.    

Throughout the paper, we always assume that $p$ is an odd prime, $d$ is a positive integer, and $i, j, k, \ell, m, n,s, t, \alpha, \beta, \gamma, \delta$ are nonnegative integers unless specified otherwise. The greatest common divisor of positive integers $a$ and $b$ is denoted by $(a,b)$. 

\section{Some useful lemmas}\label{S2}
As alluded earlier, we shall first classify DO polynomials derived from the composition of RDPs of the $(m+1)$-th kind and the monomial $X^d$, where $d$ is a positive integer. Since DO polynomials are closed under the composition with the monomial $X^{p}$, it would be sufficient to consider the cases when $(d,p)=1$. One may also note that the monomial $X^{rd}$, where $r$ is positive integer, is a DO monomial if and only if $rd=p^{\beta}(p^{\alpha}+1)$ for some nonnegative integers $\alpha$ and $\beta$. Here, $\beta$ is the highest exponent of $p$ such that $p^{\beta} \mid r$. It is obvious that whenever $(r,p)=1$, we must have $\beta =0$. In what follows, we shall invoke these assumptions and conventions as and when required.

We now present some lemmas which will be useful in the sequel.
\begin{lem}\label{L2.1M4}
Let $d$ be a positive integer and $p>3$ be a prime such that $(d,p)=1$. Assume that the coefficients of $X^d$ and $X^{2d}$ in the polynomial $\mathfrak{\widehat D}_{k,m}$ are non-zero. Then the polynomial $\mathfrak{\widehat D}_{k,m}$ is not a DO polynomial. 
\end{lem}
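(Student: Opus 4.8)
The plan is to argue by contradiction. Suppose $\mathfrak{\widehat D}_{k,m}$ were a DO polynomial; then, by the very definition of a DO polynomial as a sum of monomials $X^{p^i+p^j}$, every exponent occurring with a nonzero coefficient must be a sum of two (not necessarily distinct) powers of $p$. By hypothesis the coefficients of $X^d$ and $X^{2d}$ are nonzero, so both $d$ and $2d$ would have to be of the form $p^i+p^j$. (Since the monomials of $\mathfrak{\widehat D}_{k,m}$ carry the pairwise distinct exponents $d,2d,\dots,d\lfloor k/2\rfloor$, we may work with it as a genuine polynomial, so this step involves no reduction modulo $q-1$ that could merge or cancel the relevant terms.)

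First I would exploit the coprimality $(d,p)=1$ to pin down $d$. If $d=p^i+p^j$, then $i$ and $j$ cannot both be positive, for otherwise $p\mid d$; hence, after relabelling, $d=p^\alpha+1$ for some $\alpha\ge 0$ (this forces $d\ge 2$, so in particular $d=1$ is already excluded). Therefore $2d=2p^\alpha+2$, and the heart of the matter is the elementary claim that, when $p>3$, the integer $2p^\alpha+2$ is \emph{not} a sum of two powers of $p$. I would establish this by splitting into the cases $\alpha=0$ and $\alpha\ge 1$. When $\alpha=0$ one needs $4=p^\gamma+p^\delta$; but since $p\ge 5$ the only sum of two powers of $p$ not exceeding $4$ is $1+1=2$, so $4$ is unattainable — and it is precisely here that the hypothesis $p>3$ is indispensable, since in characteristic $3$ one has $4=3+1$ and the lemma genuinely fails. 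When $\alpha\ge 1$, I would reduce modulo $p$: from $2p^\alpha+2\equiv 2\pmod p$ we get $p\nmid 2d$, which forces the smaller of $\gamma,\delta$ in a hypothetical identity $2d=p^\gamma+p^\delta$ to be $0$; writing $2d=1+p^\delta$ then gives $p^\delta=2p^\alpha+1\equiv 1\pmod p$, whence $\delta=0$ as well, and $1=2p^\alpha+1$ is absurd. In every case the assumption that $\mathfrak{\widehat D}_{k,m}$ is a DO polynomial collapses, which is the desired conclusion.

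I do not anticipate a genuine obstacle here: the argument reduces to a short piece of number theory about writing $2(p^\alpha+1)$ as a sum of two prime-powers, together with the structural observation that $(d,p)=1$ pins down $d=p^\alpha+1$. The one point that genuinely deserves attention is recognising that characteristic $3$ is authentically exceptional (hence that the standing hypothesis $p>3$ cannot be dropped), which is exactly what the case $\alpha=0$ isolates.
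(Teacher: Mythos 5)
Your proof is correct and follows essentially the same route as the paper: both reduce to the observation that $(d,p)=1$ forces $d=p^{\alpha}+1$ and $2d=p^{\gamma}+p^{\delta}$, and both derive the contradiction $2p^{\alpha}+1=p^{\beta}$, which is only solvable for $p=3$. The only difference is presentational — you spell out the reduction from $p^i+p^j$ to $p^{\alpha}+1$ and the mod-$p$ case analysis in more detail than the paper, which simply invokes its standing convention from Section 2.
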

\begin{proof}
Assume that $p>3$ and the coefficients of $X^d$ and $X^{2d}$ in the polynomial $\mathfrak{\widehat D}_{k,m}$ are non-zero. Therefore, if $\mathfrak{\widehat D}_{k,m}$ is a DO polynomial then  $d = p^{\alpha}+1$ and $2d = p^{\beta}+1$. Thus, we have $2p^{\alpha}+1 = p^{\beta}$, which is true if and only if $\alpha = 0$, $\beta =1$ and $p=3$. This is a contradiction to our assumption that $p>3$, hence $\mathfrak{\widehat D}_{k,m}$ is not a DO polynomial.
\end{proof} 
\begin{lem}\label{L2.2M4}
Let $d$ be a positive integer and $p>5$ be a prime such that $(d,p)=1$. Assume that the coefficients of $X^d$ and $X^{3d}$ in the polynomial $\mathfrak{\widehat D}_{k,m}$ are non-zero. Then the polynomial $\mathfrak{\widehat D}_{k,m}$ is not a DO polynomial. 
\end{lem}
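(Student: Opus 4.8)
The plan is to follow verbatim the strategy of Lemma~\ref{L2.1M4}, only replacing the monomial $X^{2d}$ by $X^{3d}$ and tightening the prime bound accordingly. Suppose, for contradiction, that $\mathfrak{\widehat D}_{k,m}$ is a DO polynomial. Since by hypothesis the coefficients of $X^{d}$ and $X^{3d}$ are non-zero, both $X^{d}$ and $X^{3d}$ must be DO monomials, so, by the characterization recalled in Section~\ref{S2}, each of $d$ and $3d$ can be written in the form $p^{\beta}(p^{\alpha}+1)$ for suitable nonnegative integers $\alpha,\beta$.

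First I would use the assumption $(d,p)=1$ to conclude that the exponent of $p$ dividing $d$ is zero, hence $d=p^{\alpha}+1$ for some nonnegative integer $\alpha$. Next I would observe that $3d=3(p^{\alpha}+1)$, and since $p>5$ we have $(3,p)=1$ while $p^{\alpha}+1\equiv 1\pmod p$ (when $\alpha\geq 1$) or $p^{\alpha}+1=2$ (when $\alpha=0$); in either case $p\nmid 3d$, so the exponent of $p$ in the DO representation of $3d$ is again forced to be $0$. Therefore $3d=p^{\gamma}+1$ for some nonnegative integer $\gamma$, which yields the Diophantine relation $3p^{\alpha}+2=p^{\gamma}$.

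Finally I would dispose of this equation by a short case analysis. If $\alpha=0$, the relation forces $p^{\gamma}=5$, i.e. $p=5$, contradicting $p>5$. If $\alpha\geq 1$, then reducing modulo $p$ gives $p\mid 2$, impossible for an odd prime. In both cases we reach a contradiction, and hence $\mathfrak{\widehat D}_{k,m}$ is not a DO polynomial.

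The argument is entirely elementary; the only points requiring a little care are the divisibility bookkeeping that rules out $p\mid 3d$ (so the power-of-$p$ factor in the DO representation of $3d$ is genuinely trivial) and the verification that the hypothesis $p>5$ is exactly what is needed to eliminate the sporadic solution $5=3\cdot 1+2$ corresponding to $\alpha=0$. I do not anticipate any substantive obstacle.
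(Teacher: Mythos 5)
Your proposal is correct and follows essentially the same route as the paper: both reduce the hypothesis to the equations $d=p^{\alpha}+1$ and $3d=p^{\gamma}+1$, derive $3p^{\alpha}+2=p^{\gamma}$, and observe that the only solution is the sporadic one with $\alpha=0$, $p=5$, which the hypothesis $p>5$ excludes. Your extra bookkeeping ruling out a nontrivial power of $p$ in the DO representation of $3d$ is a harmless elaboration of a step the paper leaves implicit.
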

\begin{proof}
The proof follows using a similar reasoning as in the proof of Lemma~\ref{L2.1M4}.
\end{proof} 
\begin{lem}\label{L2.3M4}
Let $d$ be a positive integer and $p>3$ be an odd prime such that $(d,p)=1$. Assume that the coefficients of $X^{3d}$ and $X^{4d}$ in the polynomial $\mathfrak{\widehat D}_{k,m}$ are non-zero. Then the polynomial $\mathfrak{\widehat D}_{k,m}$ is not a DO polynomial. 
\end{lem}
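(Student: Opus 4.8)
The statement is the direct analogue of Lemmas \ref{L2.1M4} and \ref{L2.2M4}, and I would prove it by exactly the same diophantine argument. Suppose for contradiction that $\mathfrak{\widehat D}_{k,m}$ is a DO polynomial while the coefficients of $X^{3d}$ and $X^{4d}$ are both non-zero. Then each of $3d$ and $4d$ must be a DO-exponent; since $(d,p)=1$ and (recalling the convention stated just before Lemma \ref{L2.1M4}) $rd$ is a DO-monomial iff $rd = p^{\beta}(p^{\alpha}+1)$ with $\beta$ the exact power of $p$ dividing $r$, I would split into the cases $p>3$ (so $p\nmid 3$ and $p\nmid 4$, forcing $\beta=0$ for both) and handle the small cases $p=3$ separately if needed — but the hypothesis already assumes $p>3$, so $\beta=0$ in both and we get $3d = p^{\alpha}+1$ and $4d = p^{\gamma}+1$ for nonnegative integers $\alpha,\gamma$.

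The heart of the proof is then to show the system $3d = p^{\alpha}+1$, $4d = p^{\gamma}+1$ has no solution with $p>3$ prime and $d$ a positive integer coprime to $p$. Eliminating $d$: multiply the first by $4$ and the second by $3$ to get $4p^{\alpha}+4 = 3p^{\gamma}+3$, i.e. $4p^{\alpha}+1 = 3p^{\gamma}$. Reducing mod $p$ forces $p \mid 1$ unless $\gamma = 0$, so $\gamma = 0$; but $4d = p^{\gamma}+1 = 2$ is impossible for a positive integer $d$. Alternatively, reducing mod $p$ in $4p^{\alpha}+1=3p^{\gamma}$ gives $1 \equiv 3p^{\gamma}\pmod p$, which needs $\gamma=0$ (giving $3p^\gamma=3$, so $4p^\alpha=2$, impossible) — either way one reaches a contradiction. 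This is the routine computation; it is slightly different from the two preceding lemmas because here neither exponent is $d$ itself, so one cannot immediately subtract, but the elimination step is still elementary.

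I do not expect any genuine obstacle: the only point requiring a little care is confirming that $3d$ and $4d$ are forced to have $p$-free part (i.e. $\beta=0$ in the DO-monomial characterization) — which is immediate from $p>3$ and $(d,p)=1$, since then $p\nmid 3d$ and $p\nmid 4d$. With that in hand the argument is a two-line diophantine contradiction, and I would write the proof in the same terse style as Lemmas \ref{L2.1M4}--\ref{L2.2M4}: assume DO, deduce $3d=p^{\alpha}+1$ and $4d=p^{\gamma}+1$, eliminate $d$ to get $4p^{\alpha}+1=3p^{\gamma}$, observe this forces $\gamma=0$ and hence $4d=2$, a contradiction, so $\mathfrak{\widehat D}_{k,m}$ is not a DO polynomial.
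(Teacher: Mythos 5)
Your proposal matches the paper's proof essentially verbatim: both deduce $3d=p^{\alpha}+1$ and $4d=p^{\beta}+1$ from the non-vanishing of the two coefficients (with the $p$-power factor ruled out by $(d,p)=1$ and $p>3$), eliminate $d$ to obtain $4p^{\alpha}+1=3p^{\beta}$, and reach a contradiction. The only quibble is that your mod-$p$ step tacitly assumes $\alpha\geq 1$ (when $\alpha=0$ the left-hand side is $5$, not $1$, so for $p=5$ one needs the extra half-line that $5=3p^{\beta}$ is impossible, or simply that $3d=2$ has no integer solution), but this is a trivially closable omission and the paper's own write-up is no more careful.
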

\begin{proof}
The proof is along the similar line as in the proof of Lemma~\ref{L2.1M4}.
\end{proof}
\begin{lem}\label{L2.33M4}
Let $p=3$ and $d$ be a positive integer such that $(d,3)=1$. Assume that the coefficients of $X^{d}$ and $X^{4d}$ in the polynomial $\mathfrak{\widehat D}_{k,m}$ are non-zero. Then the polynomial $\mathfrak{\widehat D}_{k,m}$ is not a DO polynomial. 
\end{lem}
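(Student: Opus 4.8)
The plan is to mimic the argument of Lemmas~\ref{L2.1M4}--\ref{L2.3M4}, now carrying the extra information that the characteristic is fixed to $p=3$. First I would argue by contradiction: suppose $\mathfrak{\widehat D}_{k,m}$ is a DO polynomial. By hypothesis the coefficients of $X^{d}$ and $X^{4d}$ are non-zero, so both $X^{d}$ and $X^{4d}$ must be DO monomials. Using the characterization recalled at the start of Section~\ref{S2}, $X^{rd}$ is a DO monomial precisely when $rd = 3^{\beta}(3^{\alpha}+1)$, with $\beta$ the highest power of $3$ dividing $r$. Since $(d,3)=1$, taking $r=1$ forces $\beta=0$ and hence $d = 3^{\alpha}+1$ for some nonnegative integer $\alpha$; and since $(4,3)=1$ as well, taking $r=4$ again forces $\beta=0$, so $4d = 3^{\gamma}+1$ for some nonnegative integer $\gamma$.

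Next I would eliminate $d$ from these two relations. Substituting $d = 3^{\alpha}+1$ into $4d = 3^{\gamma}+1$ gives
\[
4\cdot 3^{\alpha} + 3 = 3^{\gamma}.
\]
It then remains to show this equation has no solution in nonnegative integers $\alpha,\gamma$. If $\alpha=0$ the left-hand side is $7$, which is not a power of $3$. If $\alpha\geq 1$, both sides are divisible by $3$ (so $\gamma\geq 1$), and dividing through yields $4\cdot 3^{\alpha-1}+1 = 3^{\gamma-1}$. For $\alpha\geq 2$ the left-hand side is $\equiv 1 \pmod 3$, which forces $\gamma=1$ and hence the impossible equality $4\cdot 3^{\alpha-1}+1 = 1$; for $\alpha=1$ the left-hand side is $5$, again not a power of $3$. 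In every case we reach a contradiction, so $\mathfrak{\widehat D}_{k,m}$ is not a DO polynomial.

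I do not expect any serious obstacle here: the final step is an elementary Diophantine computation. The only point that needs a little care is the bookkeeping in the first step --- namely, that because $3\nmid 4$ the exponent $4d$ cannot absorb a factor of $3$, so the shift parameter $\beta$ is $0$ for both $X^{d}$ and $X^{4d}$. This is exactly where the hypothesis $(d,3)=1$ (together with $(4,3)=1$) enters, and it is what separates this statement from the $p>3$ situations handled in Lemmas~\ref{L2.1M4}--\ref{L2.3M4}, where the analogous equations had the small prime appear on the other side.
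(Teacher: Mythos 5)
Your proposal is correct and follows essentially the same route as the paper: both deduce $d=3^{\alpha}+1$ and $4d=3^{\gamma}+1$ from the DO-monomial condition (using $(d,3)=1$ to kill the $3^{\beta}$ factor) and then show $4\cdot 3^{\alpha}+3=3^{\gamma}$ has no solution. Your case analysis of that Diophantine equation is in fact a bit more explicit than the paper's terse "which forces $\alpha=1$ and $3^{\gamma}=15$," but it is the same argument.
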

\begin{proof}
The proof follows by using a similar argument as in Lemma~\ref{L2.1M4}.
\end{proof}
Now we recall the following lemma from \cite[Proposition 6.39]{BOOK}, which will be used later.
\begin{lem}(Legendre's formula)\label{L2.4M4}
For any nonnegative integer $\omega$ and any prime $p$, $E_p(\omega!)$ the largest exponent of $p$ that divides $\omega!$ is given by  \[E_p(\omega!)=\sum_{i=1}^{\infty} \left \lfloor \frac{\omega}{p^i}\right \rfloor = \frac{\omega-\omega_s}{p-1}, \] where $\omega_s$ is the sum of the digits in the representation of $\omega$ to the base $p$.
\end{lem}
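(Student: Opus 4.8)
The plan is to prove both equalities directly from first principles, since \cite[Proposition 6.39]{BOOK} records what is classically Legendre's formula.

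For the first equality, I would count the exponent of $p$ in $\omega! = 1\cdot 2\cdots \omega$ by summing the $p$-adic valuations of the factors: $E_p(\omega!) = \sum_{n=1}^{\omega} v_p(n)$, where $v_p(n)$ denotes the largest $i$ with $p^i \mid n$. The key observation is that $v_p(n) = \#\{\, i \geq 1 : p^i \mid n \,\}$, so that
\[ E_p(\omega!) = \sum_{n=1}^{\omega}\ \sum_{\substack{i \geq 1 \\ p^i \mid n}} 1 = \sum_{i \geq 1} \#\{\, n : 1 \leq n \leq \omega,\ p^i \mid n \,\} = \sum_{i \geq 1} \left\lfloor \frac{\omega}{p^i} \right\rfloor. \]
The interchange of the two sums is legitimate because every term is a nonnegative integer and only finitely many are nonzero (the inner sum over $i$ terminates once $p^i > \omega$).

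For the second equality, I would write $\omega$ in base $p$, say $\omega = \sum_{j=0}^{r} a_j p^j$ with $0 \leq a_j \leq p-1$, so that $\omega_s = \sum_{j=0}^{r} a_j$. Dividing by $p^i$ and truncating simply discards the $i$ lowest base-$p$ digits, hence $\lfloor \omega/p^i \rfloor = \sum_{j=i}^{r} a_j p^{j-i}$. Substituting this into the formula just obtained and switching the order of summation in $i$ and $j$ gives
\[ \sum_{i=1}^{r} \left\lfloor \frac{\omega}{p^i} \right\rfloor = \sum_{i=1}^{r} \sum_{j=i}^{r} a_j p^{j-i} = \sum_{j=1}^{r} a_j \sum_{i=1}^{j} p^{j-i} = \sum_{j=1}^{r} a_j \cdot \frac{p^j - 1}{p-1}, \]
using the geometric series identity $\sum_{i=1}^{j} p^{j-i} = (p^j-1)/(p-1)$. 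Since $\sum_{j=1}^{r} a_j(p^j - 1) = \sum_{j=0}^{r} a_j p^j - \sum_{j=0}^{r} a_j = \omega - \omega_s$ (the $j=0$ term contributing $a_0(p^0-1)=0$), the displayed quantity equals $(\omega - \omega_s)/(p-1)$, as claimed.

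There is essentially no serious obstacle here: the only points requiring a little care are the justification of the interchange of summations, which is handled by the finiteness and nonnegativity noted above, and the identity $\lfloor \omega/p^i \rfloor = \sum_{j \geq i} a_j p^{j-i}$, which is immediate from the base-$p$ expansion. Alternatively, one may simply invoke \cite[Proposition 6.39]{BOOK} and omit the argument entirely.
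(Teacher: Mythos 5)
Your proof is correct: both equalities are established by the standard argument for Legendre's formula, and the interchange of summations and the digit computation are handled properly. The paper itself gives no proof of this lemma---it is quoted directly from \cite[Proposition 6.39]{BOOK}---so your closing remark that one may simply invoke the reference is exactly what the authors do, and your self-contained derivation is a valid (and classical) substitute.
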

\section{DO polynomials from RDPs of the first kind}\label{S3}

Before we begin the classification of DO polynomials from RDPs of the first kind, we shall slightly deviate and prove the following proposition that readily gives DO polynomials arising from RDPs of the $(m+1)$-th kind when the parameter $a$ is zero. 
\begin{prop}
\label{prop1}
The polynomial $D_{k,m}(0,X^d)$ is DO if and only if $k$ is even, $m\not\equiv 2 \pmod p$ and $kd$ is of the form  $2p^j(p^i+1)$, where $i,j \geq 0$.
\end{prop}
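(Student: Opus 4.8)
The plan is to compute $D_{k,m}(0,X^d)$ in closed form and then recognise when the resulting polynomial has Dembowski--Ostrom shape. Substituting $a=0$ and $X\mapsto X^d$ in the defining formula \eqref{E1.1r} kills every summand with positive $a$-exponent $k-2i$, so $D_{k,m}(0,X^d)=0$ when $k$ is odd, while for $k=2\ell$ even only the top term $i=\ell$ survives. Since $\binom{k-\ell}{\ell}=\binom{\ell}{\ell}=1$ and $\frac{k-m\ell}{k-\ell}=\frac{2\ell-m\ell}{\ell}=2-m$, that term equals $(2-m)(-1)^{\ell}X^{\ell d}$, whence
\[
D_{k,m}(0,X^d)=(2-m)(-1)^{k/2}X^{kd/2}\qquad(k\text{ even}).
\]
In particular $D_{k,m}(0,X^d)$ is a nonzero polynomial exactly when $k$ is even and $2-m\neq 0$ in $\mathbb F_p$, i.e.\ $m\not\equiv 2\pmod p$, and in that case it is the single monomial $cX^{kd/2}$ with $c=(2-m)(-1)^{k/2}\in\mathbb F_q^{*}$.

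Next I would invoke the description of DO monomials recalled at the start of Section~\ref{S2}: $X^{n}$ is a DO monomial if and only if $n=p^{\beta}(p^{\alpha}+1)$ for some $\alpha,\beta\ge 0$ (equivalently $n=p^{i}+p^{j}$). Applying this with $n=kd/2$ shows that $cX^{kd/2}$ is DO precisely when $kd/2=p^{j}(p^{i}+1)$, that is, $kd=2p^{j}(p^{i}+1)$ for some $i,j\ge 0$. Chaining the two steps yields the equivalence: $D_{k,m}(0,X^d)$ is a (nonzero) DO polynomial if and only if $k$ is even, $m\not\equiv 2\pmod p$, and $kd=2p^{j}(p^{i}+1)$; one then reads off the statement of the proposition, noting that the prescribed shape $kd=2p^{j}(p^{i}+1)$ is exactly the requirement that $kd/2$ be an integer of the form $p^{j}(p^{i}+1)$.

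The computation itself is routine, so I expect no real obstacle; the one point that needs care is the bookkeeping around the degenerate cases, namely $k$ odd and $m\equiv 2\pmod p$, in which $D_{k,m}(0,X^d)$ collapses to the zero polynomial. I would dispose of these at the outset and be explicit that a DO polynomial is here understood to be a genuine (nonzero) one --- consistent with the paper's device of passing to $\mathfrak{\widehat D}_{k,m}$ to discard the constant term --- so that the numerical condition is matched against the surviving monomial $cX^{kd/2}$ rather than against a trivial polynomial. As a small sanity check I would also record that, since $p$ is odd, $2p^{j}(p^{i}+1)$ is divisible by $4$, which fits with $kd/2=p^{j}(p^{i}+1)$ being the relevant (even) exponent.
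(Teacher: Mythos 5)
Your proposal is correct and follows essentially the same route as the paper: evaluate $D_{k,m}(0,X^d)$ in closed form (zero for $k$ odd, the single monomial $(2-m)(-X^d)^{k/2}$ for $k$ even) and then apply the criterion that $X^n$ is a DO monomial exactly when $n=p^{j}(p^{i}+1)$. If anything, you are more explicit than the paper about the degenerate cases ($k$ odd or $m\equiv 2\pmod p$, where the polynomial vanishes) and about the convention that the zero polynomial is not counted as DO, which the paper's two-line proof leaves implicit.
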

\begin{proof}
 We know that $$\displaystyle D_{k,m}(0,X^d) = \begin{cases} 0 &\mbox{if } k ~\mbox{is odd}; \\
(2-m)(-X^d)^{\frac{k}{2}} & \mbox{if } k ~\mbox{is even}.\end{cases}$$
Clearly, $D_{k,m}(0,X^d)$ is a DO polynomial if and only if $k$ is even, $m\not\equiv 2 \pmod p$ and $kd=2p^j(p^i+1).$
\end{proof}
In view of Proposition~\ref{prop1}, we shall assume that $a$ is non-zero for the rest of the paper. 

We now consider RDPs of the first kind. 

For $a\neq 0$, we write $X=Y(a-Y)$ with an indeterminate $Y\in \mathbb{F}_{q^2}$. Then 
$$D_{k,0}(a,X)=Y^k+(a-Y)^k;$$
see \cite[Section 2]{NF}. Also, we have $D_{k,0}(a,0)=a^k$. Since $D_{kp,0}(a,X)=(D_{k,0}(a,X))^p$ and $D_{kp,0}(a,0)=(D_{k,0}(a,0))^p$, we have $\mathfrak {\widehat D}_{kp}=\mathfrak {\widehat D}_k^p$, where

\begin{equation*}
\mathfrak{\widehat D}_k=D_{k,0}(a,X^d)- D_{k,0}(a,0)= \sum_{i=1}^{\lfloor\frac k2\rfloor}\frac{k}{k-i}\dbinom{k-i}{i}(-X^d)^{i}a^{k-2i}.
\end{equation*}
Since $\mathfrak {\widehat D}_{kp}=\mathfrak {\widehat D}_k^p$, it would be sufficient to consider the cases when $(k,p)=1$. The following theorems give a complete classification of DO polynomials from polynomial $\mathfrak{\widehat D}_k$ for $k$ odd and $k$ even, respectively.
\begin{thm}
Let $q$ be a power of an odd prime $p$, $a \in \mathbb{F}_{q}^*$ and $k$ odd. The polynomial $\mathfrak {\widehat D}_k$ is a DO polynomial over $\mathbb F_q$ if and only if one of the following holds.
\begin{enumerate}[{\normalfont (i)}]
\item $p=3$, $d=2p^n$, $k = 5p^{\ell},7p^{\ell}$, where $\ell, n\geq 0$.
\item $p>3$, $d=p^n(p^{\alpha}+1)$, $k=3p^{\ell}$, where $\ell, n, \alpha\geq 0$. 
\end{enumerate}
\end{thm}
\begin{proof}
The sufficiency of the theorem is straightforward. It only remains to show the necessity. Notice that when $k$ is odd, then
\begin{equation}\label{E3.1}
\begin{split}
\mathfrak{\widehat D}_{k}=-kX^{d}a^{k-2} +\frac{(k-3)k}{2!}X^{2d}a^{k-4} 
  - \frac{(k-4)(k-5)k}{3!}X^{3d}a^{k-6}+\\ \cdots +(-1)^{\frac{k-3}{2}}\,\frac{(k-1)k(k+1)}{24}\,X^{\frac{d(k-3)}{2}}a^3
+(-1)^{\frac{k-1}{2}}\,k\,X^{\frac{d(k-1)}{2}}a.
\end{split}
\end{equation}
Since $(k,p)=1$, the first term $-ka^{k-2}X^d$ in $\mathfrak{\widehat D}_k$ will always exist. Thus, if $\mathfrak{\widehat D}_k$ is a DO polynomial then $d =p^j(p^i+1)$. Since $(d,p)=1$, we have $j=0$. Therefore, we shall always take $d=p^i+1$. Now we consider two cases, $k\not\equiv 3 \pmod p$ and $k\equiv 3 \pmod p$.

\textbf{Case 1.} Let $k\not\equiv 3 \pmod p$. In this case, the coefficient of the second term in (\ref{E3.1}) is non-zero. Therefore, if $\mathfrak {\widehat D}_k$ is a DO polynomial, then $2d= p^\beta(p^\alpha +1)$ and $d=p^i+1$. Since $p$ is odd and $(d,p)=1$, $\beta=0$. Hence, the first equation reduces to $2d=p^{\alpha}+1$. Combining these two equations, we obtain $2p^i+1=p^\alpha$, which is true if and only if $p=3$, $\alpha =1$, $i=0$ and $d=2$. Therefore, in this case, we shall always assume that $p=3$ and $d=2$. For $k=5$ and $k=7$, the polynomials $\mathfrak {\widehat D}_5=a^3X^2+2aX^4$ and $\mathfrak {\widehat D}_7=2a^5X^2+2a^3X^4+2aX^6,$ are clearly DO polynomials. Now we claim that when $p=3$ and $k>7$ is odd, $\mathfrak {\widehat D}_k$ is never a DO polynomial. Since $(k,3)=1$, we have only two cases to consider, namely, $k\equiv 2\pmod{3}$ and $k\equiv 1\pmod{3}$. 

In the case $k\equiv 2\pmod{3}$, consider the second last term in (\ref{E3.1}) which is given by  $(-1)^{\frac{k-3}{2}}\,\frac{(k-1)k(k+1)}{24}\,a^3\,X^{k-3}.$ If the coefficient of the second last term in (\ref{E3.1}) is non-zero, then we claim that $(k-3)$ cannot be written as $3^i+3^j$ for some nonnegative integers $i$ and $j$. On the contrary assume that $k-3=3^i+3^j$, which implies $k-2=3^i+3^j+1$. Since $k\equiv 2\pmod{3}$, $k-2=3^i+3^j+1$ if and only if $i=j=0$. But $i=j=0$ implies $k=5$, which is a contradiction to our assumption that $k>7$. Therefore $\mathfrak{\widehat D}_k$ is not a DO polynomial in this case.  

Now assume that the coefficient of the second last term in (\ref{E3.1}) is zero. In this case, we shall show that the fourth term always exists. Note that the fourth term contains the monomial $X^8$ whose exponent cannot be written as $3^i+3^j$ for some nonnegative integers $i$ and $j$. The coefficient of the fourth term is given by  
\begin{equation}\label{4thterm}
\frac{k(k-5)(k-6)(k-7)}{24}a^{k-8}.
\end{equation}

Since $(k,3)=1$, $3\nmid k$ and $3\nmid (k-6)$. Since $k\equiv 2\pmod{3}$, where $k$ is odd and greater than $7$, $(k-5)(k-7)$ is a multiple of 24, i.e. $(k-5)(k-7)=24b$, where $b$ is an integer. Then the coefficient of the fourth term in (\ref{4thterm}) becomes $k(k-6)b$. 

Now we show that $3\nmid\, b$. On the contrary, assume that $3\mid b$. Then we have, $(k-5)(k-7)=72e$ for some integer $e$. Since $k\equiv 2\pmod{3}$, write $k=3e_1-1$ for some integer $e_1$. Recall that the second last term in \eqref{E3.1} vanishes. By substituting $3e_1-1$ for $k$ in the coefficient of the second last term, we obtain $e_1\equiv 0\pmod{3}$. Let $e_1=3n_1$ for some integer $n_1$. Then $k=3e_1-1=9n_1-1\equiv -1\pmod{9}$. From $(k-5)(k-7)=72e$ and $k\equiv -1\pmod{9}$, we have $3\equiv 0\pmod{9}$, which is a contradiction. Therefore, our assumption that $3 \mid b$ is wrong, and hence the coefficient of $X^{8}$ is non-zero. Therefore, when the second last term in \eqref{E3.1} vanishes, $\mathfrak{\widehat D}_k$ is not a DO polynomial. 

In the case $k\equiv 1\pmod{3}$, we first look at the fourth term. Recall that the fourth term contains the monomial $X^8$ whose exponent cannot be written as $3^i+3^j$ for some nonnegative integers $i$ and $j$. If the coefficient of the fourth term given in (\ref{4thterm}) is non-zero, then clearly $\mathfrak {\widehat D}_k$ is not a DO polynomial. In the case of the coefficient of the fourth term is zero, we claim that the coefficient of the 7th term, which contains the monomial $X^{14}$, is non-zero. The coefficient of the 7th term is given by $\frac{k(k-8)(k-9)(k-10)(k-11)(k-12)(k-13)}{7!}a^{k-14}.$

It is clear that the exponent of the monomial $X^{14}$ cannot be written as $3^i+3^j$ for some nonnegative integers $i$ and $j$. Since $k$ is odd and $k\equiv 1\pmod{3}$, it is clear that $3\mid (k-10)$, $6\mid (k-13)$, $9\nmid (k-10)$ and $12\nmid (k-13)$. Also, $3\nmid\,k$, $3\nmid\,(k-9)$, $3\nmid\,(k-12)$, $3\nmid(k-8)$ and $3\nmid(k-11)$. Therefore, the coefficient of the 7th term is non-zero. Hence, in the case of the coefficient of the fourth term is zero, $\mathfrak{\widehat D}_k$ is not a DO polynomial.

\textbf{Case 2.} Let $k\equiv 3 \pmod p$. In this case, notice that if $p=3$, then $k\equiv 0 \pmod 3$, which is a contradiction as $(k,p)=1$. Therefore we shall assume that $p>3$. For $k=3$, the polynomial $\mathfrak {\widehat D}_3=-3aX^d$ is a DO polynomial if and only if $d=p^i+1$. For $k>3$, consider the third term in (\ref{E3.1}), which contains the monomial $X^{3d}$. Since $k \equiv 3 \pmod p$, $k\not\equiv 4,5 \pmod p$. Hence the coefficient of the third term is non-zero. Thus, if $\mathfrak{\widehat D}_{k}$ is a DO polynomial, then $d = p^i+1$ and $3d=p^j+1$. Combining these two equations, we have $3p^i+2 = p^j$, which is true if and only if $i=0$, $j=1$, $p=5$ and $d=2$. Notice that the coefficient of last term in (\ref{E3.1}), which contains the monomial $X^{k-1}$, is non-zero. Thus, if $\mathfrak{\widehat D}_k$ is a DO polynomial then $k-1 =5^j(5^i+1)$. Since $k\equiv 3 \pmod 5$, $k \not\equiv 1 \pmod 5$, and hence $j=0$. Also, notice that if $i=0$ then $k=3$, which is a contradiction as $k>3$. Therefore $k-1=5^i+1$ which implies that $k\equiv 2 \pmod 5$, a contradiction as $k\equiv 3 \pmod 5$. Therefore for $k>3$, $\mathfrak{\widehat D}_k$ is never a DO polynomial. This completes the proof.
\end{proof}

\begin{thm}
Let $q$ be a power of an odd prime $p$, $a \in \mathbb{F}_{q}^*$ and $k$ even. The polynomial $\mathfrak {\widehat D}_k$ is a DO polynomial over $\mathbb F_q$ if and only if one of the following holds.
\begin{enumerate}[{\normalfont (i)}]     
\item $d=p^n(p^{\alpha}+1)$, $k=2p^{\ell}$, where $\ell, n, \alpha\geq 0$.
\item $p=3$, $d=2p^n$, $k = 4p^{\ell}$, where $\ell, n\geq 0$.
\end{enumerate}
\end{thm}
\begin{proof}
The sufficiency of the theorem is straightforward. It only remains to show the necessity. Notice that when $k$ is even, then
\begin{equation}
\begin{split}
\label{evencase}
\mathfrak {\widehat D}_k= -ka^{k-2}X^d+\frac{k(k-3)}{2}a^{k-4}X^{2d}-\frac{k(k-4)(k-5)}{6}a^{k-6}X^{3d}\\+\cdots 
+(-1)^{\frac{k}{2}-1}\,\frac{k^2}{4}a^2X^{d(\frac{k}{2}-1)}+(-1)^{\frac{k}{2}}\cdot 2\cdot X^{\frac{dk}{2}}.
\end{split}
\end{equation}
Since $(k,p)=1$, the first term $-ka^{k-2}X^d$ in $\mathfrak{\widehat D}_k$ will always exist. Thus, if $\mathfrak{\widehat D}_k$ is a DO polynomial, then $d =p^j(p^i+1)$. Since $(d,p)=1$, we have $j=0$. Therefore, we shall always take $d=p^i+1$. When $k=2$, the polynomials $\mathfrak {\widehat D}_2=-2X^{p^\alpha+1}$ is clearly a DO polynomial. For $k\geq 4$, we consider two cases, $k\not\equiv 3 \pmod p$ and $k\equiv 3 \pmod p$.

\textbf{Case 1.} Let $k\not\equiv 3\pmod{p}$. In this case, the coefficient of the second term in (\ref{evencase}), which contains the monomial $X^{2d}$, is non-zero. Thus, if $\mathfrak{\widehat D}_k$ is a DO polynomial, then $2d=p^{\beta}(p^{\alpha}+1)$ and $d=p^i+1$. Since $p$ is odd and $(d,p)=1$, we have $\beta = 0$. Combining these two equations, we obtain $p=3, i=0, \alpha =1$ and $d=2$. Therefore in what follows, we shall take $p=3$ and $d=2$. In the case $k=4$, the polynomial $\mathfrak {\widehat D}_4=2a^2X^2+2X^4$ is clearly DO polynomial. 

Now for $k>4$, even and $k\not\equiv 3 \pmod 3$, we claim that $\mathfrak {\widehat D}_k$ is not a DO polynomial. Consider the fourth term, which contains the monomial $X^8$. It is clear that $8$ cannot be written as $3^i+3^j$ for some nonnegative integers $i$ and $j$. If the coefficient of the fourth term in (\ref{4thterm}) is non-zero, then $\mathfrak {\widehat D}_k$ is not a DO polynomial. Now consider the case where the coefficient of the fourth term is zero. Note that the coefficient of the last term in (\ref{evencase}), which contains the monomial $X^{k}$, is always non-zero. Thus, if $\mathfrak{\widehat D}_k$ is a DO polynomial, then $k=3^{i}+1.$ Clearly, $i\neq 0$, otherwise $k=2$, a contradiction. If $i>0$, then $k\equiv 1\pmod{3}$. Now consider the second last term in (\ref{evencase}), which contains the monomial $X^{k-2}$. Clearly, the coefficient is non-zero as $(k,3)=1$. If $\mathfrak {\widehat D}_k$ is a DO polynomial and $k\equiv 1 \pmod 3$, then  $k-2=3^{j}+1$. If $i=0$ then $k=4$, which is a contradiction since $k>4$. If $i>0$, then $k=3^i+3$. This contradicts the assumption that $(k,3)=1$. Thus $\mathfrak{\widehat D}_k$ is not a DO polynomial in this case. 

\textbf{Case 2.} Let $k\equiv 3 \pmod p$. In this case, if $p=3$, then $k\equiv 0 \pmod 3$, which is a contradiction as $(k,p)=1$. Therefore, we shall always consider $p>3$. Notice that the coefficient of the last term in (\ref{evencase}), which contains the monomial $ X^{\frac{kd}{2}}$, is non-zero. Thus, if $\mathfrak{\widehat D}_k$ is a DO polynomial, then $ \frac{kd}{2}=p^{\beta}(p^{\alpha}+1)$ and $d=p^i+1$. Since $(k,p)=1$ and $(d,p)=1$, $\beta = 0$. Hence, the first equation reduces to $kd =2p^{\alpha}+2$. Combining these two equations, we get $kp^i+k =2p^{\alpha}+2$. If $i=0$, then $k=p^{\alpha}+1$, which implies $k\equiv 2\pmod{p}$ or $k\equiv 1\pmod{p}$ depending on whether $\alpha=0$ or $\alpha>0$, respectively, a contradiction. If $i>0$, then $\alpha >0$, otherwise $k(p^i+1)=4$, which is a contradiction as $p>3$. Therefore $k\equiv 2\pmod{p}$, a contradiction. This completes the proof. 
\end{proof}

\section{DO polynomials from RDPs of the second kind} \label{S4}
Recall that $D_{k,1}(a,X^d)- D_{k,1}(a,0)$ is denoted by $\mathfrak{\widehat E}_k$, where
\begin{equation} \label{E4.1}
\begin{split}
\mathfrak{\widehat E}_k=(1-k)a^{k-2}X^d+\frac{(k-2)(k-3)}{2!}a^{k-4}X^{2d}-\frac{(k-3)(k-4)(k-5)}{3!}a^{k-6}X^{3d}+ \cdots. 
\end{split}
\end{equation}
The following theorems give necessary and sufficient conditions for RDPs of the second kind to be DO polynomials for $p=3$ and $p\geq 5$, respectively.
\begin{thm} Let $q$ be a power of the odd prime $p=3$ and $a \in \mathbb{F}_{q}^*$. The polynomial $\mathfrak{\widehat E}_k$ is a DO polynomial over $\mathbb{F}_q$ if and only if one of the following holds.
\begin{enumerate}[{\normalfont (i)}]
\item $k=2,3,5,6$ and $d= p^n(p^{\alpha}+1)$, where $\alpha, ~n \geq 0.$
\item $k=4$ and $d= p^n(p^{\alpha}+1)/2$, where $\alpha, ~n \geq 0$.
\item $k = 7,10,13,19$ and $d= 2p^n$, where $n \geq 0$.
\item $k = 15$ and $d= 4p^n$, where $n \geq 0$.
\end{enumerate}
\end{thm}

\begin{proof} The sufficient part of the theorem is straightforward, therefore, we only prove the necessary part. If the polynomials $\mathfrak{\widehat E}_2= -X^d, \mathfrak{\widehat E}_3= -2aX^d$ and $\mathfrak{\widehat E}_5 =2a^3X^{d}$ are DO polynomial, then $d$ is of the form $p^{\alpha}+1$. Similarly, the polynomial $\mathfrak{\widehat E}_4=X^{2d}$ is a DO polynomial only if $d$ is of the form $(p^{\alpha}+1)/2$. If the polynomial $\mathfrak{\widehat E}_6 = a^4X^{d}+2X^{3d}$ is a DO polynomial, then $d = 3^{\alpha}+1$ and $3d = 3^t(3^{\beta}+1)$. Since $3^t \mid 3$, $t = 1$. Therefore, $\mathfrak{\widehat E}_6$ is a DO polynomial only if $d$ is of the form $p^{\alpha}+1$. The polynomial $\mathfrak{\widehat E}_7 = a^3X^{2d}+2aX^{3d}$ is a DO polynomial only if $2d = 3^{\alpha}+1$ and $3d = 3^t(3^{\beta}+1)$. Since $3^t \mid 3$, $t = 1$. Combining these two equations, we obtain $\beta = 0$, $\alpha =1$ and $d=2$. For $k\geq8$, we shall treat all possible cases depending on the value of $k$ modulo $9$.

\textbf{Case 1.} Let $k \equiv 2,8\pmod{9}$. In this case, $k\equiv2 \pmod 3$, therefore, $k \not\equiv 1\pmod{3}$ and hence the coefficient of $X^{d}$ in (\ref{E4.1}), is non-zero. Now, consider the fourth term 
\begin{equation}\label{E4th}
 \frac{(k-4)(k-5)(k-6)(k-7)}{4!}a^{k-8}X^{4d}.
\end{equation}
It is clear that $3 \nmid (k-4)$, $3 \nmid(k-6)$ and $3\nmid(k-7)$. Also, since $k\equiv 2,8 \pmod 9$, $k\not\equiv 5 \pmod 9$, and hence the highest exponent of $3$ which divides the numerator of the coefficient of fourth term is $1$. By Lemma~\ref{L2.4M4}, the highest exponent of $3$ which divides $4!$ is $1$. Therefore, coefficient of the fourth term is non-zero and $\mathfrak{\widehat E}_k$ is not a DO polynomial by  Lemma~\ref{L2.33M4}.

\textbf{Case 2.} Let $k\equiv 0,3 \pmod 9$. In this case, $k \equiv 0 \pmod 3$ and hence, the coefficient of $X^d$ in (\ref{E4.1}) is non-zero. Now consider the fourth term as given in (\ref{E4th}) again. Following similar arguments as in the Case~1 above, it is easy to see that the coefficient of the fourth term is nonzero and hence $\mathfrak{\widehat E}_k$ is not a DO polynomial by  Lemma~\ref{L2.33M4}.

\textbf{Case 3.} Let $k \equiv 1\pmod{9}$. In this case, if the polynomial $\mathfrak{\widehat E}_{10}= a^6X^{2d}+a^4X^{3d}+2X^{5d}$ is a DO polynomial, then $2d = 3^{\alpha}+1$, $3d = 3^t(3^{\beta}+1)$ and $5d = 3^{\gamma}+1$. Since $3^t \mid 3$, $t=1$. Combining the first two equations, we obtain $\beta =0$, $\alpha = 1$ and $d =2$. Now, putting these values in third equation, we have $3^{\gamma} = 9$ and $\gamma = 2$. Similarly, if the polynomial $\mathfrak{\widehat E}_{19} =a^{15}X^{2d}+a^{13}X^{3d}+2a^{9}X^{5d}+2aX^{9d}$ is a DO polynomial, then $2d = 3^{\alpha}+1$, $3d = 3^t(3^{\beta}+1)$, $5d = 3^{\gamma}+1$ and $9d = 3^s(3^{\delta}+1)$. Since $3^t \mid 3$ and $3^s \mid 9$, we have $t=1$ and $s=2$. Combining first, second and fourth equation, we obtain $\beta =0$, $\alpha = 1$ and $d =2$. Now, putting these values in third equation, we have $3^{\gamma} = 9$ and $\gamma = 2$. For $k\geq28$, since $k \equiv 1\pmod{3}$, $k \not\equiv 0,2\pmod{3}$, and hence the coefficient of $X^{2d}$ is non-zero. Now, consider the $11$th term 
\begin{equation}\label{11th}
\frac{(k-11)(k-12)(k-13)\cdots(k-19)(k-20)(k-21)}{11!}a^{k-22}(-X^d)^{11}.
\end{equation}
By Lemma~\ref{L2.4M4}, the highest exponent of $3$ that divides $11!$ is $4$. In the numerator of the coefficient of $11$th term, $(k-13), (k-16), (k-19) \equiv 0\pmod{3}$ and $(k-13), (k-16) \not\equiv 0\pmod{9}$. Now, if $k \not\equiv 19\pmod{27}$, then the highest exponent of $3$ which divides the numerator is $4$. Hence the coefficient of $X^{11d}$ is non-zero. Thus, if $\mathfrak{\widehat E}_k$ is a DO polynomial then $2d = 3^{\alpha}+1$ and $11d = 3^{\beta}+1$. Combining these equations, we have $11\cdot 3^{\alpha}+9 = 2\cdot 3^{\beta}$, which forces $\alpha = 2$ and $3^\beta = 54$, a contradiction. Therefore, $\mathfrak{\widehat E}_k$ is not a DO polynomial in this case. In the case $k\equiv 19 \pmod {27}$, we have $k\geq 46$. In this case, consider the $20$th term 
\begin{equation}\label{20th}
 \frac{(k-20)(k-21)(k-22)\cdots(k-37)(k-38)(k-39)}{20!}a^{k-40}X^{20d}.
 \end{equation}
The arguments of Case 1 can be invoked here to shows that the coefficient of $X^{20d}$ is non-zero. Thus, if $\mathfrak{\widehat E}_k$ is a DO polynomial, then $2d = 3^{\alpha}+1$ and $20d = 3^{\beta}+1$. Combining these equations, we have $10\cdot 3^{\alpha}+9 = 3^{\beta}$, which forces $\alpha = 2$ and $3^\beta = 99$, a contradiction. Therefore, $\mathfrak{\widehat E}_k$ is not a DO polynomial in this case.

\textbf{Case 4.} Let $k \equiv 4\pmod{9}$. In this case, if the polynomial $\mathfrak{\widehat E}_{13} = a^9X^{2d}+a^3X^{5d}+aX^{6d}$ is a DO polynomial, then $2d = 3^{\alpha}+1$, $5d = 3^{\beta}+1$ and $6d = 3^t(3^{\gamma}+1)$. Since $3^t \mid 6$, $t=1$. Combining these equations, we obtain $\alpha =1$, $\beta = 2$ and $d =2$. Now, for $k\geq 22$, since $k \equiv 1\pmod{3}$, we have $k \not\equiv 0,2 \pmod 3$, and hence the coefficient of $X^{2d}$ in (\ref{E4.1}) is non-zero. Now, consider the $11$th term as given in (\ref{11th}). By Lemma~\ref{L2.4M4}, the highest exponent of $3$ that divides $11!$ is $4$. In the numerator of the coefficient of $11$th term, $(k-13), (k-16), (k-19) \equiv 0\pmod{3}$ and $(k-16), (k-19) \not\equiv 0\pmod{9}$. Now if $k \not\equiv 13\pmod{27}$, then the highest exponent of $3$ which divides the numerator is $4$. Hence the coefficient of $X^{11d}$ is non-zero. Thus if $\mathfrak{\widehat E}_k$ is a DO polynomial, then $2d = 3^{\alpha}+1$ and $11d = 3^{\beta}+1$. Combining these two equations, we have $11\cdot 3^{\alpha}+9 = 2\cdot 3^{\beta}$, which forces $\alpha = 2$ and $3^\beta = 54$, a contradiction. Therefore $\mathfrak{\widehat E}_k$ is not a DO polynomial in this case. In the case $k\equiv 13 \pmod {27}$, $k\geq 22$ is equivalent to $k\geq 40$. In this case, consider the $20$th term as given in (\ref{20th}). By similar arguments as in the Case~1 one may prove that the coefficient of $X^{20d}$ is non-zero. Therefore, if $\mathfrak{\widehat E}_k$ is a DO polynomial, then $2d = 3^{\alpha}+1$ and $20d = 3^{\beta}+1$. Combining these equations, we have $10\cdot 3^{\alpha}+9 = 3^{\beta}$, which forces $\alpha = 2$ and $3^\beta = 99$, a contradiction. Therefore $\mathfrak{\widehat E}_k$ is not a DO polynomial in this case.

\textbf{Case 5.} Let $k \equiv 5\pmod{9}$. In this case, if the polynomial $\mathfrak{\widehat E}_{14} = 2a^{10}X^{2d}+a^2X^{6d}+X^{7d}$ is a DO polynomial, then $d = 3^{\alpha}+1$, $6d = 3^t(3^{\beta}+1)$ and $7d = 3^{\gamma}+1$. Since $3^t \mid 6$, $t=1$. Combining the first two equations, we obtain $\alpha =0$, $\beta = 1$ and $d =2$. Now putting these values in third equation, we have $3^{\gamma} = 13$, a contradiction. Therefore, $\mathfrak{\widehat E}_{14}$ is not a DO polynomial. Now, for  $k\geq 23$, consider the $10$th term  
\begin{equation}\label{10th}
\frac{(k-10)(k-11)(k-12)(k-13)\cdots(k-17)(k-18)(k-19)}{10!}a^{k-20}X^{10d}.
\end{equation}
By Lemma~\ref{L2.4M4}, the highest exponent of $3$ that divides $10!$ is $4$. In the numerator of the coefficient of $10$th term, $(k-11), (k-14), (k-17) \equiv 0\pmod{3}$ and $(k-11), (k-17) \not\equiv 0\pmod{9}$. Now if $k \not\equiv 14\pmod{27}$, then the highest exponent of $3$ which divides the numerator is $4$. Hence the coefficient of $X^{10d}$ is non-zero. Thus if $\mathfrak{\widehat E}_k$ is a DO polynomial, then $d = 3^{\alpha}+1$ and $10d = 3^{\beta}+1$. Combining these equations, we get $10\cdot 3^{\alpha}+9 = 3^{\beta}$, which forces $\alpha = 2$ and $3^\beta = 99$, a contradiction. Therefore $\mathfrak{\widehat E}_k$ is not a DO polynomial in this case. In the case $k\equiv 14 \pmod {27}$, $k\geq 23$ is equivalent to $k\geq 41$. Now, consider the $16$th term 
\begin{equation}\label{16th}
\frac{(k-16)(k-17)(k-18)\cdots(k-29)(k-30)(k-31)}{16!}a^{k-32}X^{16d}.
\end{equation}
By way of similar arguments as done in Case~1, the coefficient of $X^{16d}$ is non-zero. Thus, if $\mathfrak{\widehat E}_k$ is a DO polynomial, then $d = 3^{\alpha}+1$ and $16d = 3^{\beta}+1$. Combining these equations, we get $16\cdot 3^{\alpha}+15 = 3^{\beta}$, which forces $\alpha = 1$ and $3^\beta = 63$, a contradiction. Therefore $\mathfrak{\widehat E}_k$ is not a DO polynomial in this case.

\textbf{Case 6.} Let $k \equiv 6\pmod{9}$. In this case, if the polynomial $\mathfrak{\widehat E}_{15} = a^{13}X^{d}+2a^9X^{3d}+aX^{7d}$ is a DO polynomial, then $d = 3^{\alpha}+1$, $3d = 3^t(3^{\beta}+1)$ and $7d = 3^{\gamma}+1$. Since $3^t \mid 3$, $t=1$. Combining these equations, we obtain $\alpha =1$, $\gamma = 3$ and $d =4$. Now, for $k\geq 24$, consider the $10$th term as given in (\ref{10th}). One may follow the similar arguments of Case 5 above to shows that if $k \not\equiv 15\pmod{27}$, the coefficient of $X^{10d}$ is non-zero. Therefore $\mathfrak{\widehat E}_k$ is not a DO polynomial in this case. In the case $k\equiv 15 \pmod {27}$, $k \geq 24$ is equivalent to $k\geq 42$. In this case, consider the $16$th term as given in (\ref{16th}). 
Similar arguments as in the Case~1 show that the coefficient of $X^{16d}$ is non-zero. Therefore $\mathfrak{\widehat E}_k$ is not a DO polynomial in this case.

\textbf{Case 7.} Let $k \equiv 7\pmod{9}$. In this case $k\geq8$ is equivalent to $k\geq 16$. Also, since $k \equiv 1\pmod{3}$, we have $k \not\equiv 0\,\,\textnormal{or}\,\,2\pmod{3}$ and hence the coefficient of $X^{2d}$ in (\ref{E4.1}) is non-zero. Now consider the $8$th term, which is given by
\begin{equation} \label{8th}
\frac{(k-8)(k-9)(k-10)(k-11)(k-12)(k-13)(k-14)(k-15)}{8!}a^{k-16}X^{8d}.
\end{equation}
By following similar arguments as in the Case 1, it is not difficult to prove that the coefficient of $X^{8d}$ is non-zero. Thus, if $\mathfrak{\widehat E}_k$ is a DO polynomial, then $2d = 3^{\alpha}+1$ and $8d = 3^{\beta}+1$. Combining these equations, we have $4\cdot 3^{\alpha}+3 = 3^{\beta}$, which forces $\alpha = 1$ and $3^\beta = 15$, a contradiction. Therefore $\mathfrak{\widehat E}_k$ is not a DO polynomial in this case.
This completes the proof.
\end{proof}

\begin{thm} Let $q$ be a power of an odd prime $p\geq 5$ and $a \in \mathbb{F}_{q}^*$. The polynomial $\mathfrak{\widehat E}_k$ is a DO polynomial over $\mathbb{F}_q$ if and only if one of the following holds.
\begin{enumerate}[{\normalfont (i)}]
\item $k=2,3$ and $d= p^n(p^{\alpha}+1)$, where $\alpha, ~n \geq 0.$
\item $k=7$, $p=5$ and $d= 2p^n$, where $n \geq 0$.
\end{enumerate}
\end{thm}

\begin{proof} It is enough to prove the necessary part. If the polynomials $\mathfrak{\widehat E}_2= -X^d$ and $\mathfrak{\widehat E}_3= -2aX^d$ are DO polynomial, then $d$ is of the form $p^{\alpha}+1$. By Lemma~\ref{L2.1M4}, the polynomials $\mathfrak{\widehat E}_4 = -3a^2X^d+X^{2d}$ and $\mathfrak{\widehat E}_5 = -4a^3X^d+3aX^{2d}$ are not DO polynomials. The polynomial $\mathfrak{\widehat E}_6 = -5a^4X^d+6a^2X^{2d}-X^{3d}$ is a DO polynomial only if $2d = p^{\alpha}+1$ and $3d = p^{\beta}+1$. Combining these equations, we get $3p^{\alpha}+1 = 2p^{\beta}$, which forces $\alpha = 0$, $p^{\beta} =2$, a contradiction. Therefore, $\mathfrak{\widehat E}_6$ is not a DO polynomial. For the polynomial $\mathfrak{\widehat E}_7 = -6a^5X^d+10a^3X^{2d} -4aX^{3d}$, we consider two cases, namely, $p=5$ and $p>5$. For $p=5$, if $\mathfrak{\widehat E}_7 = 4a^5X^d+X^{3d}$ is a DO polynomial, then $d = 5^{\alpha}+1$ and $3d = 5^{\beta}+1$. Combining these equations, we have $3\cdot 5^{\alpha}+2 = 5^{\beta}$, which forces $\alpha = 0$, $\beta =1$ and $d=2$. For  $p>5$, $\mathfrak{\widehat E}_7 = -6a^5X^d+10a^3X^{2d}-4X^{3d}$. Since the coefficients of $X^d$ and $X^{2d}$ are non-zero, Lemma~\ref{L2.1M4} confirms that $\mathfrak{\widehat E}_7$ is not a DO polynomial. For $k\geq8$, we shall consider four cases, namely, $k \not\equiv 1,2,3 \pmod p$, $k\equiv 1 \pmod p$, $k\equiv 2 \pmod p$ and $k \equiv 3 \pmod p$, respectively.

\textbf{Case 1.} Let $k \not\equiv 1,2,3\pmod{p}$. In this case, the coefficients of $X^d$ and $X^{2d}$ in (\ref{E4.1}) are non-zero, therefore $\mathfrak{\widehat E}_k$ is not a DO polynomial by Lemma~\ref{L2.1M4}. 

\textbf{Case 2.} Let $k \equiv 1\pmod{p}$. In this case, we have $(k-2), (k-3), (k-4), (k-5) \not\equiv 0 \pmod p$. Therefore, the coefficients of $X^{2d}$ and $X^{3d}$ in (\ref{E4.1}) are non-zero. Thus, if $\mathfrak{\widehat E}_k$ is a DO polynomial, then $2d = p^{\alpha}+1$ and $3d = p^{\beta}+1$. Combining these equations, we have $3p^{\alpha}+1=2p^{\beta}$, which forces $\alpha = 0$ and $p^{\beta}=2$, a contradiction. Therefore $\mathfrak{\widehat E}_k$ is not a DO polynomial. 

\textbf{Case 3.} Let $k \equiv 2\pmod{p}$. In this case, the coefficient of the first term in (\ref{E4.1}), which contains the monomial $X^d$, is non-zero. Now we consider two cases, namely, $p=5$ and $p>5$. In the case $p=5$, $k\geq 8$ is equivalent to $k\geq 12$. We now show that if $k \not\equiv 7 \pmod{25}$, then the sixth term exists whose coefficient is given by $\frac{(k-6)(k-7)(k-8)(k-9)(k-10)(k-11)}{6!}a^{k-12}.$
Since $k\equiv 2 \pmod 5$, we have $(k-6), (k-8), (k-9), (k-10), (k-11)\not\equiv 0 \pmod 5 $. Also, if $k\not\equiv7 \pmod {25}$, then the highest exponent of $5$ which divides the numerator is 1. By Lemma~\ref{L2.4M4}, the highest exponent of $5$ that divides $6!$  is 1. Therefore the coefficient of $X^{6d}$ is non-zero. Thus, if $\mathfrak{\widehat E}_k$ is a DO polynomial, then $d = 5^{\alpha}+1$ and $6d = 5^{\beta}+1$. Combining these equations, we have $6\cdot 5^{\alpha}+5 = 5^{\beta}$, which forces $\alpha =1$ and $5^{\beta} = 35$, a contradiction. Therefore $\mathfrak{\widehat E}_k$ is not a DO polynomial in this case. Now if $k \equiv 7 \pmod{25}$, then the condition $k\geq 12$ is equivalent to $k \geq 32$. In this case, using the similar arguments, we can show that the coefficient of $X^{8d}$ is non-zero. Thus, if $\mathfrak{\widehat E}_k$ is a DO polynomial, then $d = 5^{\alpha}+1$ and $8d = 5^{\beta}+1$. Combining these equations, we have $8\cdot 5^{\alpha}+7 = 5^{\beta}$, which forces $\alpha =0$ and $5^{\beta} = 15$, a contradiction. Therefore $\mathfrak{\widehat E}_k$ is not a DO polynomial in this case. In the case $p>5$, since $k\equiv 2 \pmod p$, we have $k \not\equiv 1,3,4,5 \pmod p$. Hence the coefficients of $X^d$ and $X^{3d}$ in (\ref{E4.1}) are non-zero, therefore $\mathfrak{\widehat E}_k$ is not DO polynomial by Lemma~\ref{L2.2M4}.

\textbf{Case 4.} Let $k \equiv 3\pmod{p}$. In this case, the first term $(1-k)X^d$ in (\ref{E4.1}) does not vanish. Now we consider two cases, namely, $p=5$ and $p>5$. In the case $p=5$, since $k\equiv 3 \pmod 5$, we have $k \not\equiv 0,1,2,4 \pmod{5}$, and hence the fourth term as given in (\ref{E4th}) does not vanish. Therefore, if $\mathfrak{\widehat E}_k$ is a DO polynomial, then $d=5^i+1$ and $4d=5^j+1$. Combining these equations, we have $4\cdot5^i+3=5^j$, which forces $i=0$ and $5^j=7$, a contradiction. Therefore $\mathfrak{\widehat E}_k$ is not a DO polynomial in this case. In the case $p>5$, since $k\equiv 3 \pmod p$, we have $(k-1), (k-4), (k-5), (k-6), (k-7), (k-8), (k-9) \not\equiv 0 \pmod p$. Therefore, the fourth term as given in (\ref{E4th}) and the fifth term whose coefficient is given by $\frac{(k-5)(k-6)(k-7)(k-8)(k-9)}{5!}a^{k-10},$ do not vanish. Thus, if $\mathfrak{\widehat E}_k$ is a DO polynomial, then $d = p^{\alpha}+1$, $4d = p^{\beta}+1$ and $5d = p^{\gamma}+1$. Combining the first two equations, we have $4p^{\alpha}+3 = p^{\beta}$, which forces $\alpha =0$, $\beta = 1$, $p=7$ and $d =2$. Now putting these values in third equation, we have $7^{\gamma} = 9$, a contradiction. Therefore $\mathfrak{\widehat E}_k$ is not a DO polynomial in this case. This completes the proof.  
\end{proof}

One may recall from \cite[Theorem 3.1]{WY} that RDPs of the second kind and RDPs of the third kind admit the following relationship $$D_{k,2}(a,X) = aD_{k-1,1}(a,X).$$ Thus, it is obvious that $\mathfrak{\widehat F}_k$ is a DO polynomial whenever $\mathfrak{\widehat E}_{k-1}$ is a DO polynomial. Consequently, the classification of DO polynomials from RDPs of the third kind $\mathfrak{\widehat F}_k$ follows immediately. In view of this, we shall consider RDPs of the fourth kind in the next section. 

\section{DO polynomials from RDPs of the fourth kind}\label{S5}

Recall that $D_{k,3}(a,X^d)- D_{k,3}(a,0)$ is denoted by $\mathfrak{\widehat G}_k$, where
\begin{equation}
\begin{split}
\mathfrak{\widehat G}_k=(3-k)a^{k-2}X^d+\frac{(k-3)(k-6)}{2}a^{k-4}X^{2d}- \frac{(k-4)(k-5)(k-9)}{3!}a^{k-6}X^{3d}+ \cdots.  
\end{split}
\end{equation}
Also, from (\ref{E1.2}), it is easy to see that $\mathfrak{\widehat G}_k = \mathfrak{\widehat D}_k \pmod 3$. Therefore, for $p=3$, $\mathfrak{\widehat G}_k$ is a DO polynomial whenever $\mathfrak{\widehat D}_k$ is a DO polynomial and the classification of DO polynomials from $\mathfrak{\widehat D}_k$ has already been discussed in Section \ref{S3}. Therefore, throughout this section, we consider $p\geq 5$. The following theorem gives a complete classification of DO polynomials derived from $\mathfrak{\widehat G}_k$.
\begin{thm} Let $q$ be a power of an odd prime $p\geq5$ and $a \in \mathbb{F}_{q}^*$. The polynomial $\mathfrak{\widehat G}_{k}$  is a DO polynomial over $\mathbb{F}_q$ if and only if one of the following holds.

\begin{enumerate}[{\normalfont (i)}]
\item $k=2$ and $d= p^n(p^{\alpha}+1)$, where $\alpha, ~n \geq 0$.
\item $k=6,11$, $p=5$ and $d= 2p^n$, where $n \geq 0$.
\end{enumerate}
\end{thm}

\begin{proof} It is enough to prove only the necessary part. If the polynomial $\mathfrak{\widehat G}_2= X^d$ is a DO polynomial, then $d = p^{\alpha}+1$. The polynomial $\mathfrak{\widehat G}_{3}$ is the zero polynomial and hence it is not a DO polynomial. The polynomials $\mathfrak{\widehat G}_4 = -a^2X^{d}-X^{2d}$, $\mathfrak{\widehat G}_5=-2a^3X^d-aX^{2d}$ and $\mathfrak{\widehat G}_{7}= -4a^5X^d+2a^3X^{2d}+2aX^{3d}$ are not DO polynomials by Lemma~\ref{L2.1M4}. In the case of the polynomial $\mathfrak{\widehat G}_6=-3a^4X^d+X^{3d}$, we consider two cases, namely, $p=5$ and $p>5$. In the case $p=5$, if $\mathfrak{\widehat G}_6$ is a DO polynomial, then $d=5^i+1$ and $3d=5^j+1$. Combining these equations, we have $3\cdot5^i+2=5^j$, which is true if and only if $i=0$, $j=1$ and $d=2$. When $p>5$, $\mathfrak{\widehat G}_6$ is not a DO polynomial by Lemma~\ref{L2.2M4}. For $k \geq 8$, we consider two cases, namely, $p=5$ and $p>5$.

\textbf{Case 1.} Let $p>5$. Note that when $k \not\equiv 3,6\pmod{p}$, the coefficients of $X^{d}$ and $X^{2d}$ in $\mathfrak{\widehat G}_k$ are non-zero. Therefore, $\mathfrak{\widehat G}_k$ is not a DO polynomial by Lemma~\ref{L2.1M4}. In the case $k \equiv 3 \pmod{p}$, the coefficient of $X^{3d}$ is non-zero and also, the coefficient of $X^{4d}$ in $\mathfrak{\widehat G}_k$, given by  $\frac{(k-5)(k-6)(k-7)(k-12)}{4!}a^{k-8}$ is non-zero. Therefore, $\mathfrak{\widehat G}_k$ is not a DO polynomial by Lemma~\ref{L2.3M4}. When $k \equiv 6 \pmod{p}$, the coefficients of $X^{d}$ and $X^{3d}$ in $\mathfrak{\widehat G}_k$ are non-zero, therefore, $\mathfrak{\widehat G}_k$ is not a DO polynomial by  Lemma~\ref{L2.2M4}.  

\textbf{Case 2.} Let $p=5$. Notice that when $k \not\equiv 1,3\pmod{5}$, the coefficients of $X^{d}$ and $X^{2d}$ in $\mathfrak{\widehat G}_k$ are non-zero, therefore $\mathfrak{\widehat G}_k$ is not a DO polynomial by  Lemma~\ref{L2.1M4}. In the case $k \equiv 3 \pmod{5}$, the coefficients of $X^{3d}$ and $X^{4d}$ in $\mathfrak{\widehat G}_k$ are non-zero, therefore $\mathfrak{\widehat G}_k$ is not a DO polynomial by Lemma~\ref{L2.3M4}. For $k\equiv1 \pmod 5$, if the polynomial $\mathfrak{\widehat G}_{11}=2a^9X^d+a^5X^{3d}+4aX^{5d}$ is a DO polynomial, then $d = 5^{\alpha}+1$, $3d = 5^{\beta}+1$ and $5d = 5^t(5^{\gamma}+1)$. Since $5^t \mid 5$, $t = 1$. Thus, by combining these equations, we obtain $\alpha = 0$, $ \beta =1$ and $d=2$. For $k\geq16$, since $k\equiv 1 \pmod 5$, we have $k\not\equiv 0,2,3,4 \pmod 5$ and hence the coefficient of $X^{3d}$ in $\mathfrak{\widehat G}_k$ is non-zero. Now consider the $6$th term whose coefficient is given by $\frac{(k-7)(k-8)(k-9)(k-10)(k-11)(k-18)}{6!}a^{k-12}.$ By Lemma~\ref{L2.4M4}, the highest exponent of $5$ which divides $6!$ is $1$. Also, if $k\not\equiv 11 \pmod {25}$, then highest exponent of $5$ that divides the numerator of coefficient of $X^{6d}$ is $1$, hence the coefficient of $X^{6d}$ is non-zero. Thus, if $\mathfrak{\widehat G}_k$ is a DO polynomial, then $3d = 5^{\alpha}+1$ and $6d = 5^{\beta}+1$. Combining these equations, we get $2\cdot 5^{\alpha}+1 = 5^{\beta}$, which forces $\alpha = 0$ and $5^{\beta} =3$, a contradiction. Thus $\mathfrak{\widehat G}_k$ is not a DO polynomial in this case. In the case $k\equiv11 \pmod {25}$, consider the $11$th term whose coefficient is given by $\frac{(k-12)(k-13)\cdots(k-19)(k-20)(k-21)(k-33)}{11!}a^{k-22}.$ It is easy to verify that the coefficient of $X^{11d}$ is non-zero. Thus, if $\mathfrak{\widehat G}_k$ is a DO polynomial, then $3d = 5^{\alpha}+1$ and $11d = 5^{\beta}+1$. Combining these equations, we have $11\cdot 5^{\alpha}+8 = 3\cdot5^{\beta}$, which forces $\alpha = 0$ and $3\cdot 5^{\beta} =19$, a contradiction. Thus $\mathfrak{\widehat G}_k$ is not a DO polynomial in this case. 
\end{proof}

\section{DO polynomials from RDPs of the fifth kind}\label{S6}

Here we consider RDPs of the fifth kind. Recall that  $D_{k,4}(a,X^d)- D_{k,4}(a,0)$ is denoted by $\mathfrak{\widehat H}_k$, where
\begin{equation}
\begin{split}
\mathfrak{\widehat H}_k=(4-k)a^{k-2}X^d+\frac{(k-3)(k-8)}{2}a^{k-4}X^{2d}- \frac{(k-4)(k-5)(k-12)}{3!}a^{k-6}X^{3d}+ \cdots.
\end{split}
\end{equation}
It is easy to see from (\ref{E1.2}) that $\mathfrak{\widehat H}_k = \mathfrak{\widehat E}_k \pmod 3$, thus for $p=3$, $\mathfrak{\widehat H}_k$ is a DO polynomial whenever $\mathfrak{\widehat E}_k$ is a DO polynomial. Thus, throughout this section, we take $p\geq 5$. 
\begin{thm} Let $q$ be a power of an odd prime $p \geq5$ and $a \in \mathbb{F}_{q}^*$. The polynomial $\mathfrak{\widehat H}_{k}$ is a DO polynomial over $\mathbb{F}_q$ if and only if one of the following holds.
\begin{enumerate} [{\normalfont (i)}]
\item $k=2,3$ and $d= p^n(p^{\alpha}+1)$, where $\alpha, ~n \geq 0$.
\item $k=4$ and $d= p^n(p^{\alpha}+1)/2$, where $\alpha,~ n \geq 0$.
\end{enumerate}
\end{thm}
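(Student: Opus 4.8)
The plan is to follow the same pattern already established in the proofs for $\mathfrak{\widehat D}_k$, $\mathfrak{\widehat E}_k$ and $\mathfrak{\widehat G}_k$: verify sufficiency by inspection, then do the necessity by a case analysis on $k$, using the coefficient formula for $\mathfrak{\widehat H}_k$ together with the obstruction Lemmas~\ref{L2.1M4}, \ref{L2.2M4}, \ref{L2.3M4} and the Legendre-type Lemma~\ref{L2.4M4}. First I would record the general coefficient: the term carrying $X^{id}$ in $\mathfrak{\widehat H}_k$ has coefficient a scalar multiple of $\frac{k-4i}{k-i}\binom{k-i}{i}$, so I must keep track of which of these vanish modulo $p$. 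Note that since $p\geq 5$ throughout this section, the congruence $k-4i\equiv 0$ only kills specific terms, and the binomial factor $\binom{k-i}{i}$ is controlled by Lemma~\ref{L2.4M4}.

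Next I would dispose of the small cases $k=2,3,4,5,6,7$ by hand, exactly as in the previous theorems: $\mathfrak{\widehat H}_2 = -2X^d + \cdots$ reduces (after using $4-k$, etc.) to monomials forcing $d=p^\alpha+1$; $\mathfrak{\widehat H}_3$ similarly; for $\mathfrak{\widehat H}_4$ the coefficient of $X^d$ is $(4-k)=0$, so only $X^{2d}$ survives (together with possibly a cubic term whose coefficient I must check vanishes), giving $2d=p^n(p^\alpha+1)$, i.e. $d=p^n(p^\alpha+1)/2$; and for $k=5,6,7$ I expect the coefficients of $X^d$ and $X^{2d}$ (or $X^d$ and $X^{3d}$) to be simultaneously non-zero for all $p\geq5$, so Lemma~\ref{L2.1M4} or Lemma~\ref{L2.2M4} rules them out. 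The factor $(k-3)(k-8)$ in the $X^{2d}$ coefficient and $(k-4)(k-5)(k-12)$ in the $X^{3d}$ coefficient mean the only dangerous residues are $k\equiv 3,8\pmod p$ (second term) and $k\equiv 4,5,12\pmod p$ (third term), which for small $p$ I enumerate directly.

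For $k\geq 8$ I would split into $k\not\equiv 3,8\pmod p$, $k\equiv 3\pmod p$, and $k\equiv 8\pmod p$ (the latter only relevant when $p\geq 5$, and for $p=5,7$ one must also watch the third-term residues). In the generic branch $k\not\equiv 3,8\pmod p$ the coefficients of $X^d$ and $X^{2d}$ are both non-zero, so Lemma~\ref{L2.1M4} finishes it immediately. In the branch $k\equiv 3\pmod p$ the $X^{3d}$ coefficient is non-zero (since $k-4,k-5,k-12\not\equiv 0$), and one then either uses the $X^d$ term (which survives, as $k\equiv 3\not\equiv 4$) with Lemma~\ref{L2.2M4}, or pushes to the $X^{4d}$ term and applies Lemma~\ref{L2.3M4}. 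In the branch $k\equiv 8\pmod p$ the $X^d$ coefficient survives and one looks at the $X^{3d}$ term or further terms; for the residual small primes $p=5,7$ where the simple lemmas are not conclusive I would, as in the $\mathfrak{\widehat E}_k$ and $\mathfrak{\widehat G}_k$ proofs, descend modulo $p^2$ (or $p^3$) and invoke Lemma~\ref{L2.4M4} to show that a suitably chosen higher-order term — e.g. the $6$th, $8$th, $10$th or $11$th term — has non-zero coefficient, then derive a contradiction from the Diophantine equations $id=p^\alpha+1$, $jd=p^\beta+1$.

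The main obstacle, as in the preceding sections, is the book-keeping for the small primes $p=5$ (and possibly $p=7$): the obstruction lemmas require $p$ large, so for those primes I cannot merely quote Lemmas~\ref{L2.1M4}--\ref{L2.3M4} and must instead pin down, for each residue class of $k$ modulo an appropriate power of $p$, a specific term of $\mathfrak{\widehat H}_k$ whose coefficient provably survives — this is precisely where Lemma~\ref{L2.4M4} and a careful choice of which term to examine (balancing the $p$-adic valuation of $\binom{k-i}{i}$ against that of $i!$) does the work, and where the argument is most delicate. I expect, however, that the extra vanishing factor $(k-8)$ in the second coefficient (compared with $\mathfrak{\widehat E}_k$) actually makes the generic branches \emph{larger}, so that after the small-$k$ checks the theorem collapses quickly, with no sporadic solutions beyond $k\leq 4$ — which is why the statement is so short.
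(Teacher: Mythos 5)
Your overall strategy is the right one and matches the paper's: dispose of small $k$ by hand, then run a residue-class analysis on $k$ modulo $p$ using the coefficient $\frac{k-4i}{k-i}\binom{k-i}{i}$ together with Lemmas~\ref{L2.1M4}--\ref{L2.3M4}, reserving Lemma~\ref{L2.4M4} for the delicate small-prime sub-cases. However, there is a concrete gap in your case split for $k\geq 8$: you partition into $k\not\equiv 3,8\pmod p$, $k\equiv 3\pmod p$ and $k\equiv 8\pmod p$, and in the ``generic'' branch you assert that the coefficients of $X^d$ and $X^{2d}$ are both non-zero. But the coefficient of $X^d$ is $(4-k)a^{k-2}$, which vanishes exactly when $k\equiv 4\pmod p$ --- a residue class you never list as dangerous (your inventory of vanishing residues covers only the second and third terms). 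Since $4\not\equiv 3,8\pmod p$ for odd $p$, the whole class $k\equiv 4\pmod p$ sits inside your generic branch, where Lemma~\ref{L2.1M4} is inapplicable and your argument breaks down.

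This is not a removable technicality: $k\equiv 4\pmod p$ is precisely where the paper has to work hardest. For $p>5$ it argues directly from the surviving $X^{2d}$ and $X^{4d}$ terms, deriving $2d=p^{\alpha}+1$, $4d=p^{\beta}+1$ and hence the impossible $2p^{\alpha}+1=p^{\beta}$. For $p=5$ one must go further: the $X^{2d}$ term survives, and one pairs it with the $X^{5d}$ term (taking care that $5d=5^t(5^{\beta}+1)$ forces $t=1$) unless $k\equiv 9\pmod{25}$, in which case the $X^{5d}$ coefficient may vanish and one must pass to the $9$th term, using Lemma~\ref{L2.4M4} to check that the $5$-adic valuation of its numerator equals that of $9!$. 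Your proposal does anticipate this kind of mod-$p^2$ descent, but attaches it to the $k\equiv 8\pmod p$ branch, where in fact Lemma~\ref{L2.2M4} (for $p>5$) already suffices; the descent is needed in the $k\equiv 4\pmod 5$ branch that your decomposition omits. Relatedly, your closing heuristic that the extra factor $(k-8)$ makes the generic branch \emph{larger} is backwards --- it adds a vanishing residue and shrinks the branch --- though this does not affect the final statement.
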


\begin{proof} The sufficiency of the theorem is straightforward. It only remains to show the necessity. If the polynomials $\mathfrak{\widehat H}_2 = 2X^d$ and $\mathfrak{\widehat H}_3 = aX^d$ are DO polynomials, then $d=p^{\alpha}+1$. Similarly, if the polynomial $\mathfrak{\widehat H}_4 = -2X^{2d}$ is a DO polynomial, then $d=({p^{\alpha}+1})/2$. In the case of polynomials $\mathfrak{\widehat H}_5 = -a^3X^d-3aX^{2d}$, $\mathfrak{\widehat H}_6 = -2a^4X^d-3a^2X^{2d}+2X^{3d}$ and $\mathfrak{\widehat H}_7 = -3a^5X^{d}-2a^3X^{2d}+aX^{3d}$, the coefficients of $X^{d}$ and $X^{2d}$ are non-zero. Therefore, $\mathfrak{\widehat H}_5$, $\mathfrak{\widehat H}_6$ and $\mathfrak{\widehat H}_7$ are not DO polynomials by Lemma~\ref{L2.1M4}. The polynomial $\mathfrak{\widehat H}_8 = -4a^6X^d+8a^2X^{3d}-2X^{4d}$ is not a DO polynomial by Lemma~ \ref{L2.3M4}. If the polynomial $\mathfrak{\widehat H}_9 = -5a^7X^{d}+3a^5X^{2d}+10a^3X^{3d}-7aX^{4d}$ is a DO polynomial, then $2d = 5^{\alpha}+1$ and $4d = 5^{\beta}+1$. Combining these equations, we get $2\cdot 5^{\alpha}+1 = 5^{\beta}$, which forces $\alpha =0$, $5^{\beta} = 3$, a contradiction. Thus $\mathfrak{\widehat H}_9$ is not a DO polynomial. For $k \geq 10$, we consider two cases, $p=5$ and $p>5$.

\textbf{Case 1.} Let $p =5$. Notice that when $k \not\equiv 3,4\pmod{5}$, the coefficients of $X^{d}$ and $X^{2d}$ in $\mathfrak{\widehat H}_k$ are non-zero, therefore, $\mathfrak{\widehat H}_k$ is not a DO polynomial by  Lemma~\ref{L2.1M4}. In the case $k \equiv 3 \pmod{5}$, the coefficients of $X^{d}$ is clearly non-zero and also, the coefficient of $X^{4d}$ in $\mathfrak{\widehat H}_k$ given by  $\frac{(k-4)(k-5)(k-6)(k-7)}{4!}a^{k-8}$ is non-zero. Thus, if $\mathfrak{\widehat H}_k$ is a DO polynomial, then $d = 5^{\alpha}+1$ and $4d = 5^{\beta}+1$. Combining these equations, we have $4\cdot 5^{\alpha}+3 = 5^{\beta}$, which forces $\alpha = 0$ and $5^{\beta} =7$, a contradiction. Thus $\mathfrak{\widehat H}_k$ is not a DO polynomial in this case. When $k \equiv 4 \pmod{5}$, the coefficient of $X^{2d}$ in $\mathfrak{\widehat H}_k$ is non-zero. Also, if $k \not\equiv 9\pmod{25}$, the coefficient of $X^{5d}$  in $\mathfrak{\widehat H}_k$ given by $ \frac{(k-6)(k-7)(k-8)(k-9)(k-20)}{5!}a^{k-10}$ is non-zero. Thus, if $\mathfrak{\widehat H}_k$ is a DO polynomial, then $2d = 5^{\alpha}+1$ and $5d = 5^t(5^{\beta}+1)$. Since $5^t \mid 5$, $t=1$ and hence, the second equation reduces to $d = 5^{\beta}+1$. Combining these equations, we have $2\cdot 5^{\beta}+1 = 5^{\alpha}$, which forces $\beta = 0$ and $5^{\alpha} =3$, a contradiction. Thus $\mathfrak{\widehat H}_k$ is not a DO polynomial. In the case $k \equiv 9 \pmod{25}$, the condition $k \geq 10$ is equivalent to $k \geq 34$. Now consider the $9$th term whose coefficient is given by $\frac{(k-10)(k-11)(k-12)\cdots (k-16)(k-17)(k-36)}{9!}a^{k-18}.$ Since $k\equiv 9 \pmod {25}$, we have $k \not\equiv 14~(\mbox{mod}~ 25)$. Hence the highest exponent of $5$, which divides the numerator is $1$. By Lemma \ref{L2.4M4}, highest exponent of $5$, which divides $9!$ is $1$. Therefore, the coefficient of $X^{9d}$ is non-zero. Thus, if $\mathfrak{\widehat H}_k$ is a DO polynomial, then $2d = 5^{\alpha}+1$ and $9d = 5^{\beta}+1$. Combining these two equations, we have $9\cdot 5^{\alpha}+7 = 2\cdot 5^{\beta}$, which forces $\alpha = 0$ and $5^{\beta} =8$, a contradiction. Thus $\mathfrak{\widehat H}_k$ is not a DO polynomial. 

\textbf{Case 2.} Let $p>5$. Notice that when $k \not\equiv 3,4,8\pmod{p}$, the coefficients of $X^{d}$ and $X^{2d}$ in $\mathfrak{\widehat H}_k$ are non-zero, therefore $\mathfrak{\widehat H}_k$ is not a DO polynomial by  Lemma~\ref{L2.1M4}. In the case $k \equiv 3,8 \pmod{p}$, the coefficients of $X^{d}$ and $X^{3d}$ in $\mathfrak{\widehat H}_k$ are non-zero, therefore $\mathfrak{\widehat H}_k$ is not a DO polynomial by Lemma~\ref{L2.2M4}. When $k \equiv 4 \pmod{p}$, the coefficients of $X^{2d}$ and $X^{4d}$ in $\mathfrak{\widehat H}_k$ are non-zero. Thus, if $\mathfrak{\widehat H}_k$ is a DO polynomial, $2d = p^{\alpha}+1$ and $4d = p^{\beta}+1$. Combining these equations, we have $2\cdot p^{\alpha}+1 = p^{\beta}$, which forces $\alpha = 0$ and $p^{\beta} =3$, a contradiction. Thus, $\mathfrak{\widehat H}_k$ is not a DO polynomial. This completes the proof.
\end{proof}

\section{The case $m\geq5$}\label{S7}

For $m \geq 5$, we shall classify DO polynomials from the polynomial $\mathfrak{\widehat D}_{k,m}$, where
\begin{equation}
\begin{split}
\mathfrak{\widehat D}_{k,m}=(m-k)a^{k-2}X^d+\frac{(k-3)(k-2m)}{2}a^{k-4}X^{2d}-\frac{(k-4)(k-5)(k-3m)}{3!}a^{k-6}X^{3d}+\cdots. 
\end{split}
\end{equation}
From (\ref{E1.2}), it is straightforward to see that for $p=3$, $\mathfrak{\widehat D}_{k,m} = \mathfrak{\widehat D}_k, \mathfrak{\widehat E}_k,~\mbox{and}~ \mathfrak{\widehat F }_k$, whenever $m\equiv 0, 1~\mbox{and}~ 2 \pmod 3$, respectively. Similarly, for $p\geq5$, $\mathfrak{\widehat D}_{k,m} = \mathfrak{\widehat D}_k, \mathfrak{\widehat E}_k, \mathfrak{\widehat F }_k, \mathfrak{\widehat G}_k ~\mbox{and}~ \mathfrak{\widehat H}_k$, whenever $m\equiv 0, 1, 2, 3~\mbox{and}~ 4 \pmod p$, respectively. Thus the only cases that remain to be considered are $p>5$ and $m \not\equiv 0,1,2,3,4 \pmod{p}$ for which we have the following theorem.
\begin{thm} Let $q$ be a power of an odd prime $p>5$ and $a \in \mathbb{F}_{q}^*$. The polynomial $\mathfrak{\widehat D}_{k,m}$ where $m\not\equiv 0, 1, 2, 3, 4 \pmod p$ is a DO polynomial over $\mathbb{F}_q$ if and only if one of the following holds.
\begin{enumerate} [{\normalfont (i)}]
\item $k=2,3$ and $d= p^n(p^{\alpha}+1)$, where $\alpha, ~n \geq 0$.
\item $k=5$, $m\equiv 5\pmod p$ and $d= p^n(p^{\alpha}+1)/2$, where $\alpha,~ n \geq 0$.
\item $k=5$, $2m\equiv 5\pmod p$ and $d= p^n(p^{\alpha}+1)$, where $\alpha,~n \geq 0$.
\end{enumerate}
\end{thm}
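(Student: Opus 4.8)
The plan is to read off the coefficient of each $X^{id}$ in $\mathfrak{\widehat D}_{k,m}$ and then run a short case analysis steered by Lemmas~\ref{L2.1M4} and~\ref{L2.2M4}. Denote by $c_i$ the coefficient of $X^{id}$; using $\frac{1}{k-i}\binom{k-i}{i}=\frac{1}{i!}(k-i-1)(k-i-2)\cdots(k-2i+1)$ one obtains
\[
c_i=\frac{(k-mi)(k-i-1)(k-i-2)\cdots(k-2i+1)}{i!}\,(-1)^i a^{k-2i},
\]
an integer times $(-1)^i a^{k-2i}$. For $i\le 5$ (legitimate because $p>5$, so $i!$ is a unit mod $p$) this coefficient is nonzero in $\mathbb{F}_q$ exactly when $k\not\equiv mi\pmod p$ and $k\not\equiv j\pmod p$ for every $j$ with $i+1\le j\le 2i-1$. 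In particular: $c_1\ne0\iff k\not\equiv m$; $c_2\ne0\iff k\not\equiv 2m,3$; $c_3\ne0\iff k\not\equiv 3m,4,5$; $c_4=0$ whenever $k\equiv5$; and $c_5\ne0\iff k\not\equiv 5m,6,7,8,9$. Recall also from the start of Section~\ref{S2} that DO-ness depends only on the exponent set and is preserved by $X\mapsto X^p$, so after reducing to $(d,p)=1$ a surviving monomial $X^{id}$ with $(i,p)=1$ is of DO type iff $id=p^\alpha+1$ for some $\alpha\ge0$; the factors $p^n$ in the statement then reappear for free.

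Sufficiency is a direct check, so I concentrate on necessity. Since $m\not\equiv0,1,2,3,4\pmod p$ and $p>5$, the residues $m,2m,3m,4m,5m$ are pairwise distinct, and $c_1,c_2$ cannot both vanish; if both are nonzero, then (after reduction) $d=p^\alpha+1$ and $2d=p^\beta+1$, excluded by Lemma~\ref{L2.1M4}. Hence exactly one of $c_1,c_2$ vanishes in any DO case. I then treat $2\le k\le 7$ by hand. For $k=2,3$ the polynomial is the single term $c_1X^d$ with $c_1\ne0$ (as $m\not\equiv2,3$), so $d$ must be a DO exponent --- case~(i). For $k=4$ both $c_1,c_2$ are nonzero (as $m\not\equiv2,4$), excluded by Lemma~\ref{L2.1M4}. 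For $k=6$ both $c_2,c_3$ are nonzero (as $m\not\equiv2,3$), and $2d=p^\alpha+1$, $3d=p^\beta+1$ give $3p^\alpha+1=2p^\beta$, impossible. For $k=7$ at most one of $c_1,c_2,c_3$ vanishes, so at least two are nonzero, and whichever two they are they contradict Lemma~\ref{L2.1M4} (pair $c_1,c_2$), Lemma~\ref{L2.2M4} (pair $c_1,c_3$), or $3p^\alpha+1=2p^\beta$ (pair $c_2,c_3$). For $k=5$ there are exactly three possibilities: $m\equiv5$, where $\mathfrak{\widehat D}_{5,m}=-5aX^{2d}$ --- case~(ii); $2m\equiv5$, where $\mathfrak{\widehat D}_{5,m}$ is a nonzero scalar times $a^3X^d$ --- case~(iii); and neither, where $c_1,c_2$ are both nonzero and Lemma~\ref{L2.1M4} applies.

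For $k\ge8$ the terms $c_1,c_2,c_3,c_4$ are all present, and I claim the polynomial is never DO. Exactly one of $c_1,c_2$ vanishes. If $c_1=0$, i.e.\ $k\equiv m\pmod p$, then $c_2\ne0$ and $c_3\ne0$ unless $m\equiv5$; when $c_3\ne0$ the pair $c_2,c_3$ gives the impossible $3p^\alpha+1=2p^\beta$. If $c_1\ne0$ and $c_2=0$, then $k\equiv3$ or $k\equiv2m\pmod p$: in the former case $c_3\ne0$ and $c_1,c_3$ contradict Lemma~\ref{L2.2M4}, and otherwise $k\equiv2m$ (with $k\not\equiv3$) and $c_3\ne0$ unless $2m\equiv5$, in which case $c_1,c_3$ again contradict Lemma~\ref{L2.2M4}. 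The only surviving configurations are $k\equiv m\equiv5\pmod p$ and $k\equiv 2m\equiv5\pmod p$; in both, $k\equiv5\pmod p$ forces $c_3=c_4=0$, while $p\ge7$ forces $k\ge12$, so $c_5$ is present and (one checks $k\not\equiv5m,6,7,8,9$) nonzero. The surviving pair is then $\{c_2,c_5\}$ (if $c_1=0$) or $\{c_1,c_5\}$ (if $c_2=0$), giving respectively $5p^\alpha+3=2p^\beta$ and $5p^\alpha+4=p^\beta$, both impossible for $p>5$. This covers all cases.

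The main obstacle is the bookkeeping in the $k\ge8$ step: one has to confirm that for every residue of $k$ modulo $p$ at least two of the first few coefficients survive, that a surviving coefficient is genuinely nonzero in $\mathbb{F}_q$ rather than merely nonzero as a formal rational expression (this is where the $i!$-denominator computation and the hypothesis $p>5$ are used), and that the small equations $jp^\alpha+(j-i)=ip^\beta$ for $(i,j)\in\{(2,3),(2,5),(1,5)\}$ have no solutions with $p>5$. The genuinely delicate point is the degenerate configuration $k\equiv5\pmod p$ with $m\equiv5$ or $2m\equiv5$, where $c_3$ and $c_4$ vanish together and one must descend to $c_5$; isolating exactly this configuration, and peeling off $k=5$ --- which is what produces cases~(ii) and~(iii) --- is what gives the theorem its final form.
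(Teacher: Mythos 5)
Your proposal is correct and follows essentially the same route as the paper: the same case split on the residue of $k$ modulo $p$ relative to $3$, $m$, $2m$, the same use of Lemmas~\ref{L2.1M4} and~\ref{L2.2M4} together with the exponent equations $3p^\alpha+1=2p^\beta$, $5p^\alpha+3=2p^\beta$, $5p^\alpha+4=p^\beta$, and the same descent to the fifth term in the degenerate configuration $k\equiv 5\pmod p$. The only blemish is the garbled clause ``unless $2m\equiv5$, in which case $c_1,c_3$ again contradict Lemma~\ref{L2.2M4}'' (when $2m\equiv 5$ one has $c_3=0$, so the Lemma applies in the complementary case), but your next sentence handles that configuration correctly via $c_5$, so the argument is complete.
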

\begin{proof} Only sufficiency of the theorem is required to be proved. If the polynomials $\mathfrak{\widehat D}_{2,m} = (m-2)X^{d}$ and $\mathfrak{\widehat D}_{3,m} = (m-3)aX^{d}$ are DO polynomials, then $d$ is of the form $p^{\alpha}+1$. The polynomial $\mathfrak{\widehat D}_{4,m} = (m-4)a^2X^{d}+(2-m)X^{2d}$ is not a DO polynomial by Lemma~\ref{L2.1M4}. In the case of the polynomial $\mathfrak{\widehat D}_{5,m} = (m-5)a^3X^{d}+(5-2m)aX^{2d}$, we consider three cases, namely, $m \equiv 5\pmod{p}$, $2m \equiv 5\pmod{p}$ and $m, 2m \not\equiv 5\pmod{p}$. In the case $m \equiv 5\pmod{p}$, if $\mathfrak{\widehat D}_{5,m} =-5aX^{2d}$ is a DO polynomial, then $d$ is of the form $(p^{\alpha}+1)/2$. When $2m \equiv 5\pmod{p}$ and if $\mathfrak{\widehat D}_{5,m} =(m-5)a^3X^{d}$ is a DO polynomial, then $d$ is of the form $p^{\alpha}+1$. In the case $m, 2m \not\equiv 5\pmod{p}$, $\mathfrak{\widehat D}_{5,m} = (m-5)a^3X^{d}+(5-2m)aX^{2d}$ is not a DO polynomial by Lemma \ref{L2.1M4}. For $k \geq 6$, we consider four cases, namely, $k\not\equiv3, m, 2m \pmod p$, $k\equiv3 \pmod p$, $k \equiv m \pmod p$ and $k\equiv 2m \pmod p$.

\textbf{Case 1.} Let $k \not\equiv 3,m,2m\pmod{p}$. In this case, the coefficients of $X^{d}$ and $X^{2d}$ in $\mathfrak{\widehat D}_{k,m}$ are non-zero, and therefore $\mathfrak{\widehat D}_{k,m}$ is not a DO polynomial by Lemma~\ref{L2.1M4}.   

\textbf{Case 2.} Let $k\equiv3\pmod{p}$. In this case, we have $k\not\equiv 4,5 \pmod{p}$. Also, note that $k \not\equiv m\pmod{p}$, otherwise $m\equiv 3\pmod{p}$. Similarly, $k \not\equiv 3m\pmod{p}$, otherwise $m\equiv 1\pmod{p}$. Therefore, the coefficients of $X^{d}$ and $X^{3d}$ in $\mathfrak{\widehat D}_{k,m}$ are non-zero and hence $\mathfrak{\widehat D}_{k,m}$ is not a DO polynomial by  Lemma~\ref{L2.2M4}. 

\textbf{Case 3.} Let $k \equiv m\pmod{p}$. Notice that $k \not\equiv 3,4\pmod{p}$. Also, note that $k \not\equiv 2m, 3m\pmod{p}$, otherwise $m\equiv 0\pmod{p}$. Therefore, the coefficient of $X^{2d}$ in $\mathfrak{\widehat D}_{k,m}$ is non-zero. Also, when $k\not\equiv5\pmod{p}$, the coefficient of $X^{3d}$ in $\mathfrak{\widehat D}_{k,m}$ is non-zero. Thus, if $\mathfrak{\widehat D}_{k,m}$ is a DO polynomial, then $2d = p^{\alpha}+1$ and $3d = p^{\beta}+1$. Combining these equations, we get $3p^{\alpha}+1 = 2p^{\beta}$, which forces $\alpha = 0$ and $p^{\beta}=2$, a contradiction. Therefore, $\mathfrak{\widehat D}_{k,m}$ is not a DO polynomial in this case. In the case $k\equiv5\pmod{p}$, consider the coefficient of the fifth term $\frac{(k-6)(k-7)(k-8)(k-9)(k-5m)}{5!}a^{k-10}.$ Since $k \equiv 5 \pmod p$, we have $(k-6), (k-7), (k-8), (k-9)\not\equiv 0 \pmod{p}$. Also, note that $k\not\equiv5m\pmod{p}$, otherwise $m\equiv1 \pmod p$. Therefore, the coefficients of $X^{2d}$ and $X^{5d}$ in $\mathfrak{\widehat D}_{k,m}$ are non-zero. Thus, if $\mathfrak{\widehat D}_{k,m}$ is a DO polynomial, then $2d = p^{\alpha}+1$ and $5d = p^{\beta}+1$. Combining these equations, we get $5p^{\alpha}+3 = 2p^{\beta}$, which forces $\alpha = 0$ and $p^{\beta}=4$, a contradiction. Therefore, $\mathfrak{\widehat D}_{k,m}$ is not a DO polynomial.

\textbf{Case 4.} Let $k \equiv 2m\pmod{p}$. Notice that $k\not\equiv m\pmod{p}$, otherwise $m\equiv 0\pmod{p}$. Therefore the coefficient of $X^d$ in $\mathfrak{\widehat D}_{k,m}$ is non-zero. Also note that $k\not\equiv 3m\pmod{p}$ and $k\not\equiv4\pmod{p}$, otherwise $m\equiv 0\pmod{p}$ and $m\equiv 2\pmod{p}$, respectively. When $k \not \equiv 5 \pmod p$, the coefficient of $X^{3d}$ in $\mathfrak{\widehat D}_{k,m}$ is non-zero and hence $\mathfrak{\widehat D}_{k,m}$ is not a DO polynomial by  Lemma~\ref{L2.2M4}. In the case $k \equiv 5\pmod{p}$, the condition $k\geq6$ is equivalent to $k\geq 13$. Now consider the fifth term again. By similar arguments as done in Case~3 above, it is easy to see that the coefficient of $X^{5d}$ is non-zero. Thus, if $\mathfrak{\widehat D}_{k,m}$ is a DO polynomial, then $d = p^{\alpha}+1$ and $5d = p^{\beta}+1$. Combining these equations, we get $5p^{\alpha}+4 = p^{\beta}$, which forces $\alpha = 0$ and $p^{\beta}=9$, a contradiction. Therefore $\mathfrak{\widehat D}_{k,m}$ is not a DO polynomial. This completes the proof.
\end{proof}

\section{Discussion on planarity}\label{S8}
We consider the planarity of DO polynomials obtained from RDPs of the $(m+1)$-th kind as listed in the Appendix \ref{AA}. First, we shall discuss the tools and techniques that are needed to understand the planarity of DO polynomials. These tools and techniques are similar to the ones used in \cite{CM}. Recall that a polynomial function $f:  \mathbb{F}_q \rightarrow \mathbb{F}_q$ is said to be planar if the difference function $\Delta_{f}(X, \epsilon)=f(X+\epsilon)-f(X)-f(\epsilon)$ permutes the elements of $\mathbb{F}_q$ for each $\epsilon \in \mathbb{F}_q^*$. If $f$ happens to be a DO polynomial, the difference function $\Delta_{f}(X, \epsilon)$ for each $\epsilon \in \mathbb{F}_q^*$, belongs to another well-known class of polynomials called linearized polynomials. Therefore, a DO polynomial $f$ is planar if and only if the linearized polynomial $\Delta_{f}(X, \epsilon)$ is a permutation polynomial for each $\epsilon \in \mathbb{F}_q^*$. The permutation behaviour of linearized polynomial is well-known. In fact, \cite[Theorem 7.9]{BOOK} tells us that a linearized polynomial is a permutation polynomial over $\mathbb{F}_{q}$ if and only if its only root in $\mathbb{F}_q$ is $0$. Therefore, in order to show that a DO polynomial $f$ is not planar, it is sufficient to show that the difference function $\Delta_{f}(X,Y)= f(X+Y)-f(X)-f(Y)$ has a root in $\mathbb{F}_{q}^* \times \mathbb{F}_{q}^*$.

We recall that a DO polynomial function $f$ from $\mathbb{F}_q$ to itself is called 2-to-1 function if the cardinality of the image set on $\mathbb{F}_q^*$ is $(q-1)/2.$ Qiu et al. \cite{WEA} showed that the size of the image set on $\mathbb{F}_q^*$ of a planar polynomial $f$ over $\mathbb{F}_q$ must be at least $(q-1)/2.$ For a DO polynomial $f$, Weng and Zeng  \cite[Theorem 2.3]{WZ} gave the following necessary and sufficient condition for $f$ to be planar. 

\begin{lem} \label{l9.1}
Let $f$ be a DO polynomial over $\mathbb{F}_q$. Then $f$ is planar if and only if $f$ is 2-to-1.
\end{lem}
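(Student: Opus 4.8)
The plan is to recast both ``planar'' and ``$2$-to-$1$'' as statements about the fibres of $f$ and then check that they describe the same object. First I would record two elementary facts about a DO polynomial $f$: that $f(-X)=f(X)$, since every monomial $X^{p^{i}+p^{j}}$ is even; and that $\Delta_{f}(X,Y)=f(X+Y)-f(X)-f(Y)$ is a symmetric $\mathbb{F}_{p}$-bilinear polynomial, so that in particular $\Delta_{f}(X,-Y)=-\Delta_{f}(X,Y)$.

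Next I would derive the identity $f(X+\epsilon)-f(X-\epsilon)=2\,\Delta_{f}(X,\epsilon)$ by expanding both sides using bilinearity and $f(-\epsilon)=f(\epsilon)$. Since $p$ is odd, $2$ is invertible, so for each $\epsilon\in\mathbb{F}_{q}^{*}$ one has $\Delta_{f}(X,\epsilon)=0$ if and only if $f(X+\epsilon)=f(X-\epsilon)$. Because $\Delta_{f}(\,\cdot\,,\epsilon)$ is a linearized polynomial, it permutes $\mathbb{F}_{q}$ exactly when its only root is $0$ (by the criterion for linearized permutation polynomials recalled above, \cite[Theorem~7.9]{BOOK}). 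Hence $f$ is planar iff, for every $\epsilon\neq 0$, the relation $f(X+\epsilon)=f(X-\epsilon)$ forces $X=0$. The change of variables $u=X+\epsilon$, $v=X-\epsilon$ is a bijection from $\{(X,\epsilon):\epsilon\neq 0\}$ onto $\{(u,v):u\neq v\}$ under which $X=0$ corresponds to $v=-u$; so the condition becomes: $f(u)=f(v)$ with $u\neq v$ implies $v=-u$. Using $f(-u)=f(u)$ and $u\neq -u$ for $u\neq 0$, this is in turn equivalent to: $f^{-1}(0)=\{0\}$, and every $c\in\mathbb{F}_{q}^{*}$ is attained by $f$ either $0$ or exactly $2$ times. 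This is the crucial reformulation, and it is here that the bilinear structure of DO polynomials does all the work.

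Then I would close both directions with a short count. If $f$ is planar, the reformulation gives $f^{-1}(0)=\{0\}$, so the $q-1$ nonzero arguments of $f$ are distributed among values each taken exactly twice; hence $f$ takes $(q-1)/2$ distinct nonzero values and is $2$-to-$1$. (One can equally invoke Qiu et al.'s bound \cite{WEA} together with the reverse inequality furnished by $f(-X)=f(X)$ and $p$ odd.) Conversely, if $f$ is $2$-to-$1$, observe that each fibre $f^{-1}(c)$ with $c\neq 0$ lies in $\mathbb{F}_{q}^{*}$ and is stable under the fixed-point-free involution $X\mapsto -X$, hence has even cardinality; therefore $q-|f^{-1}(0)|=\#\{x:f(x)\neq 0\}\ge 2\cdot\frac{q-1}{2}=q-1$, which forces $f^{-1}(0)=\{0\}$ and then makes every nonzero value attained exactly twice --- precisely the reformulation above, so $f$ is planar.

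The one step that needs genuine care is the equality $|f^{-1}(0)|=1$ just used: it depends on reading ``$2$-to-$1$'' as ``$f$ takes exactly $(q-1)/2$ distinct nonzero values'' (equivalently, $f|_{\mathbb{F}_{q}^{*}}$ is a $2$-to-$1$ map into $\mathbb{F}_{q}^{*}$), so that degenerate DO polynomials --- those admitting a nonzero root --- are correctly excluded; without this the ``if'' direction can fail. Apart from that, the argument is purely formal manipulation of $\Delta_{f}$ and uses neither Weil-type bounds nor character sums; those tools enter only when one verifies $2$-to-$1$-ness for the explicit families of DO polynomials catalogued in Appendix~\ref{AA}.
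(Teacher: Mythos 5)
The paper gives no proof of this lemma at all: it is imported verbatim from Weng and Zeng \cite[Theorem 2.3]{WZ}, so your self-contained argument is a genuinely different (and more informative) route. The argument itself is correct: evenness $f(-X)=f(X)$, bi-additivity of $\Delta_f$, the identity $f(X+\epsilon)-f(X-\epsilon)=2\Delta_f(X,\epsilon)$, the linearized-permutation criterion, and the substitution $u=X+\epsilon$, $v=X-\epsilon$ together reduce planarity to the statement that every fibre of $f$ is $\{0\}$, empty, or a pair $\{u,-u\}$, after which both counting arguments close; what this buys over the citation is that one sees exactly where the DO hypothesis enters (only through evenness and bi-additivity of the difference operator). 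Your closing caveat is not pedantry but a genuine correction to the paper's wording: under the literal definition adopted here, namely $|f(\mathbb{F}_q^*)|=(q-1)/2$, the ``if'' direction is false. For instance $f(X)=X^4-X^2$ over $\mathbb{F}_9=\mathbb{F}_3(i)$ is a DO polynomial whose image on $\mathbb{F}_9^*$ is $\{0,\,2,\,2+i,\,2-i\}$, of cardinality $4=(9-1)/2$, yet $\Delta_f(X,1)=X^3-X$ vanishes on all of $\mathbb{F}_3$, so $f$ is not planar. The hypothesis your converse actually uses --- and the one under which Weng and Zeng's theorem is stated --- is that $f$ restricted to $\mathbb{F}_q^*$ is $2$-to-$1$ onto a subset of $\mathbb{F}_q^*$, equivalently $|f(\mathbb{F}_q)|=(q+1)/2$; with that reading your proof is complete, and with the paper's reading no proof could be.
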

Lemma \ref{l9.1} has further consequences. First, if a DO polynomial $f$ has a root $z\in \mathbb{F}_q^*$, then $-z$ is also a root of $f$. Therefore, the cardinality of image set of $f$ on $\mathbb{F}_q^*$ is strictly less than $(q-1)/2$ and hence, in such a case, $f$ is not planar. 

For the second consequence, we begin with an easy observation that if $f(X)$ is a DO polynomial, then so is $f(X^{p^n})$. We know that $X^{p^n}$ is a linearized permutation polynomial over $\mathbb{F}_{p^e}$. Therefore, the cardinality of the image set of $f(X)$ and  $f(X^{p^n})$ on $\mathbb{F}_q^*$ is same. Hence if $f(X)$ is planar, then $f(X^{p^n})$ is also planar. Therefore in such situations, it would be sufficient to consider the planarity of $f(X)$. 

Another important tool that we would require to study the planarity of DO polynomials is the following version of Weil bound as stated in~\cite[Lemma 2.4]{CG}. 
\begin{lem} \label{weil bound}
Let $f(X,Y)$ be an absolutely irreducible polynomial in $\mathbb{F}_q[X,Y]$.  Then the number $N_f$ of $(u,v)\in \mathbb{F}_q \times \mathbb F_q$ with $f(u,v)=0$ satisfies \[N_f \geq q-(d-1)(d-2)\sqrt{q}-d-1,\] where $d$ is the total degree of $f$.
\end{lem}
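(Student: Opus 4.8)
The plan is to derive the bound from the Hasse--Weil inequality for smooth projective curves together with the classical genus bound for plane curves, bookkeeping the points lost when passing between the affine curve, its projective closure, and a nonsingular model. Write $F(X,Y,Z)$ for the degree-$d$ homogenization of $f$. Since $f$ has total degree exactly $d$, its leading form $F(X,Y,0)$ is nonzero, so $Z\nmid F$; hence any nontrivial factorization of $F$ over $\overline{\mathbb F}_q$ would dehomogenize to one of $f$, and $F$ is absolutely irreducible. Thus $\overline C=V(F)\subseteq\mathbb P^2$ is an absolutely irreducible plane curve of degree $d$, and since a line meets $\overline C$ in at most $d$ points,
\[
N_f\ \ge\ \#\overline C(\mathbb F_q)-d .
\]
Let $\pi\colon\widetilde C\to\overline C$ be the normalization, a smooth projective absolutely irreducible $\mathbb F_q$-curve of some genus $g$.

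Two facts are now invoked. First, the arithmetic genus of a degree-$d$ plane curve equals $p_a=\binom{d-1}{2}$, and $g\le p_a$ with $p_a-g=\sum_P\delta_P$, the sum running over the singular points $P$ of $\overline C$ over $\overline{\mathbb F}_q$ of their $\delta$-invariants. Over every smooth $\mathbb F_q$-point of $\overline C$ there is a unique, hence $\mathbb F_q$-rational, preimage under $\pi$, while over a singular $\mathbb F_q$-point the number of $\mathbb F_q$-rational preimages is at most the number of branches $r_P$, and $r_P-1\le\binom{r_P}{2}\le\delta_P$; summing gives
\[
\#\widetilde C(\mathbb F_q)\ \le\ \#\overline C(\mathbb F_q)+\sum_P\bigl(r_P-1\bigr)\ \le\ \#\overline C(\mathbb F_q)+(p_a-g)\ \le\ N_f+d+(p_a-g).
\]
Second, the Riemann hypothesis for curves over finite fields (Weil) gives $\#\widetilde C(\mathbb F_q)\ge q+1-2g\sqrt q$. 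Combining the two and using $0\le g\le p_a$ to estimate $2g\sqrt q+(p_a-g)\le 2p_a\sqrt q$,
\[
N_f\ \ge\ q+1-2g\sqrt q-d-(p_a-g)\ \ge\ q+1-d-2p_a\sqrt q\ =\ q-(d-1)(d-2)\sqrt q-d+1,
\]
which is even marginally stronger than the claimed inequality.

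The single deep ingredient is the Hasse--Weil bound $\#\widetilde C(\mathbb F_q)\ge q+1-2g\sqrt q$, which in the present context is simply quoted from Weil \cite{WEIL}; the remaining work is the elementary bookkeeping of the middle paragraph --- that homogenization preserves absolute irreducibility, that at most $d$ points are lost at infinity, and that the discrepancy between $\#\widetilde C(\mathbb F_q)$ and the count of smooth affine $\mathbb F_q$-points is dominated by $p_a-g$ through the branch estimate $r_P-1\le\delta_P$ --- and this is where the only genuine care is required, since one must handle the singular locus and the comparison of geometric genus with arithmetic genus correctly. An alternative, self-contained route avoiding normalization applies Stepanov's polynomial method directly to $\overline C$ in the style of Schmidt, but the genus-bound argument above is the shortest once the curve form of Weil's theorem is taken as known.
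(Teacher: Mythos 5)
The paper does not actually prove this lemma; it is quoted verbatim as ``the following result of Weil'' with a bare citation to \cite{WEIL}, exactly as Coulter and Matthews do in \cite{CM}, so there is no in-paper argument to compare yours against. Your derivation is correct and is the standard way to extract this affine statement from the Hasse--Weil theorem. The three bookkeeping steps are all handled properly: the homogenization $F$ is absolutely irreducible because $Z\nmid F$ forces any factorization of $F$ to dehomogenize to a nontrivial factorization of $f$; at most $d$ rational points are lost on the line at infinity by B\'ezout, since $\overline C$ does not contain that line; and the discrepancy between $\#\widetilde C(\mathbb F_q)$ and $\#\overline C(\mathbb F_q)$ is controlled by $\sum_P(r_P-1)\le\sum_P\delta_P=p_a-g$, using that every $\mathbb F_q$-point of $\widetilde C$ maps to an $\mathbb F_q$-point of $\overline C$ and that the fibre over a singular point has at most $r_P$ points. (For the inequality $r_P-1\le\delta_P$ you could also invoke it directly --- $\widetilde{\mathcal O}_P/\mathcal O_P$ surjects onto a $(r_P-1)$-dimensional quotient since $\mathcal O_P$ is local while its integral closure has $r_P$ maximal ideals --- which avoids any appeal to pairwise intersection multiplicities of branches; but your route via $\binom{r_P}{2}\le\delta_P$ is also valid.) Your final estimate $2g\sqrt q+(p_a-g)\le 2p_a\sqrt q$ is legitimate since $2\sqrt q\ge 1$, and it yields $N_f\ge q-(d-1)(d-2)\sqrt q-d+1$, which is indeed marginally sharper than the stated $-d-1$; the weaker constant in the literature simply absorbs slacker bookkeeping. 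In short, you have supplied a complete proof of a statement the paper only cites.
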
 

We now describe the strategy for using the Weil bound to determine the planarity of certain DO polynomials. Let $f$ be a DO polynomial over $\mathbb{F}_q$ and consider the difference function $\Delta_{f}(X, Y)= f(X+Y)-f(X)-f(Y)$. If this difference function has an absolutely irreducible factor, say $h(X,Y)$, of total degree $d_h$, then Lemma~\ref{weil bound} gives a lower bound for the cardinality $N_h$ of all the points $(u,v) \in \mathbb{F}_q \times \mathbb{F}_q$ such that $h(u,v)=0$. If the degree of the absolutely irreducible factor $h(X,Y)$ is not too large and $q$ is large enough, then we have many $\mathbb F_q$-rational points on the affine algebraic curve defined by $h(X,Y)=0$. Moreover, if $N_h$ is strictly larger than the number of solutions to $h(X,Y)=0$ with either $X=0$ or $Y=0$, then Lemma~\ref{weil bound} yields the existence of a point $(u,v)$ in $\mathbb{F}_q^* \times \mathbb{F}_q^*$ such that $h(u,v)=0$ and hence, for such a point, we have $\Delta_{f}(u, v)= 0$, i.e, $\Delta_{f}(X, Y)$ has a root in $\mathbb{F}_{q}^* \times \mathbb{F}_{q}^*$. Thus, in order to show that $f$ is not exceptional planar (i.e., planar over infinitely many extensions of $\mathbb{F}_q$), it is sufficient to show that the difference function of $f$ contains an absolutely irreducible component with a solution in $\mathbb{F}_q^* \times \mathbb{F}_q^*$.

It is straightforward to see that for $b \in \mathbb{F}_q^*$, RDPs of the $(m+1)$-th kind admit the following relationship 
\begin{equation} \label{rdpr}
\displaystyle b^{kd}D_{k,m}(a,X^d) = D_{k,m}(ab^d, (Xb^2)^{d}).
\end{equation}
In view of (\ref{rdpr}), and due to the fact that the planarity property of a function $f$ remains invariant under linear transformations (i.e. if $f(X)$ is planar so is $\alpha f(\lambda X + \mu)+ \beta$ with $\alpha, \lambda \neq 0$), we have the following lemma.
\begin{lem} \label{PE}
Let $D_{k,m}(a, X)$ be the $k$-th RDP of the $(m+1)$-th kind. Then $D_{k,m}(a, X^d)$ is planar equivalent over $\mathbb F_q$ to $D_{k,m}(ab^d, X^{d})$ for any $b \in \mathbb{F}_q^*$. 
\end{lem}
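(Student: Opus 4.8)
The plan is to obtain the asserted planar equivalence as an immediate consequence of the scaling identity (\ref{rdpr}) together with the invariance of planarity under affine substitutions that was recalled just above. First I would fix terminology: I regard $f, g \in \mathbb{F}_q[X]$ as \emph{planar equivalent} over $\mathbb{F}_q$ if $g(X) = \alpha\, f(\lambda X + \mu) + \beta$ for some $\alpha, \lambda \in \mathbb{F}_q^*$ and $\mu, \beta \in \mathbb{F}_q$. Since the excerpt already notes that $\alpha f(\lambda X + \mu) + \beta$ is planar whenever $f$ is (for $\alpha, \lambda \neq 0$), any two planar equivalent polynomials are simultaneously planar or non-planar; hence it suffices to exhibit such an affine relation between $D_{k,m}(a, X^d)$ and $D_{k,m}(ab^d, X^d)$.

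Next I would take (\ref{rdpr}) in the form $b^{kd} D_{k,m}(a, X^d) = D_{k,m}\bigl(ab^d, (b^2 X)^d\bigr)$ and substitute $X \mapsto b^{-2} X$. Because $b \in \mathbb{F}_q^*$, this is a legitimate substitution and it yields
\[
D_{k,m}(ab^d, X^d) = b^{kd}\, D_{k,m}\bigl(a, (b^{-2} X)^d\bigr) = b^{kd} f(b^{-2} X),
\]
where $f(X) := D_{k,m}(a, X^d)$. This is precisely of the shape $\alpha f(\lambda X + \mu) + \beta$ with $\alpha = b^{kd}$, $\lambda = b^{-2}$, and $\mu = \beta = 0$; both $\alpha$ and $\lambda$ are non-zero. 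Invoking the stated invariance of planarity under linear transformations then gives that $D_{k,m}(ab^d, X^d)$ is planar over $\mathbb{F}_q$ if and only if $D_{k,m}(a, X^d)$ is, which is exactly the claim. The same reasoning applies verbatim to the associated DO polynomials $\mathfrak{\widehat D}_{k,m}$ obtained by deleting the constant term, since a constant summand never affects the difference function.

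I do not expect any real obstacle here; the only thing needing a moment of care is the verification of (\ref{rdpr}) itself (which the excerpt labels as straightforward). Expanding both sides with (\ref{E1.1r}), the $i$-th summand on the left picks up the factor $b^{kd} a^{k-2i}$, while on the right it picks up $(b^2)^{di}(b^d)^{k-2i} a^{k-2i} = b^{2di + dk - 2di} a^{k-2i} = b^{kd} a^{k-2i}$; so the two sides agree term by term, and the identity follows. Thus the proof reduces to this short bookkeeping step plus the single appeal to the affine-invariance of the planarity property.
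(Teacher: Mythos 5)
Your proposal is correct and follows exactly the paper's own (very brief) argument: the paper derives Lemma \ref{PE} directly from the scaling identity (\ref{rdpr}) together with the invariance of planarity under linear substitutions, which is precisely what you do, merely with the substitution $X \mapsto b^{-2}X$ and the term-by-term check of (\ref{rdpr}) written out explicitly.
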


Over the algebraic closure ${\overline{\mathbb F}_q}$ of $\mathbb F_q$, we derive a useful consequence of Lemma \ref{PE}. Note that one may always choose $b \in \overline{\mathbb{F}}_{q}$ that satisfies the equation $aX^d = 1$. In this way the factorizations of $\Delta_{D_{k,m}(a, X^d)}$ and $\Delta_{D_{k,m}(1,X^d)}$ over ${\overline{\mathbb F}_q}$ are linearly related. As a consequence, the absolutely irreducible factors of $\Delta_{D_{k,m}(a,X^d)}$ are of the same form for all non-zero $a$. Thus, without loss of generality, one may always take $a=1$, while checking the absolute irreducibility of certain polynomials. 

Now we consider the planarity of the DO polynomials listed in the Appendix \ref{AA} in three different cases.

\textbf{Case 1.} Let $p=3$. The planarity of monomials $\mathfrak{\widehat D}_2 = X^{3^{\alpha}+1}$, $\mathfrak{\widehat E}_2 = 2X^{3^{\alpha}+1}$, $\mathfrak{\widehat E}_3 = aX^{3^{\alpha}+1}$, $\mathfrak{\widehat E}_4 = X^{3^{\alpha}+1}$, and $\mathfrak{\widehat E}_5 = 2a^3 X^{3^{\alpha}+1}$ is well-known by  \cite[Theorem 3.3]{CMLB} and these monomials are planar over $\mathbb{F}_{3^e}$ if and only if $e/(\alpha,e)$ is odd. It is easy to see that $X=a$ is a root of the polynomials $\mathfrak{\widehat D}_5 = 2aX^{4}+a^3X^2$, $\mathfrak{\widehat E}_7 = 2aX^{6}+a^3X^4$, $\mathfrak {\widehat {D}}_7 =  2aX^{6}+2a^3X^4+ 2a^5X^2$, $\mathfrak{\widehat E}_{13} = aX^{12}+a^3X^{10}+a^9X^4$ and $\mathfrak{\widehat E}_{19} = 2aX^{18}+2a^9X^{10}+a^{13}X^6+a^{15}X^4$. Therefore, these DO polynomials are not planar. Now we consider the planarity of the rest of the DO polynomials one by one.

(\romannumeral 1)~~  In the case of binomial $f(X) = \mathfrak{\widehat D}_4 = 2X^{4}+2a^2X^2$, consider the difference function $\Delta_{f}(X,Y) =  f(X+Y)-f(X)-f(Y) = XYB(X,Y),$ where $B(X,Y)=X^2+Y^2-a^2$, which is simply an irreducible conic since $a$ is non-zero. Therefore, by Lemma \ref{weil bound}, the number $N_B$ of $(u,v)\in \mathbb{F}_q\times \mathbb{F}_q$ with $B(u,v)=0$ is greater than or equal to $q-3$. Note that we can obtain at most $4$ solutions $(u,v)$ to $B(X,Y)=0$ by putting either $X=0$ or $Y=0$. Therefore, when $q-3>4$, there must exist a root $(u,v)\in \mathbb{F}_{q}^* \times \mathbb{F}_{q}^* $ of $B(X,Y)$. Therefore, $\mathfrak{\widehat D}_4$ is not planar when $q>7$, i.e., $e\geq 2$. For $e=1$, $\mathfrak{\widehat D}_4\equiv X^2~(\mbox{mod}~X^3-X)$ which is clearly a planar function.

(\romannumeral 2)~~  The DO binomial $\mathfrak{\widehat E}_6 = 2X^{3(3^{\alpha}+1)}+a^4X^{3^{\alpha}+1}$ can be written as composition of a linearized polynomial and a monomial as $(2X^3+a^4X) \circ X^{3^{\alpha}+1}$. Now from \cite[Theorem 2.3]{CMLB}, $\mathfrak{\widehat E}_6$ is planar if and only if $2X^3+a^4X$ is a permutation polynomial and $X^{3^{\alpha}+1}$ is planar. Now, since $X=a^2$ is a root of the linearized polynomial $2X^3+a^4X$, $2X^3+a^4X$ is not a permutation polynomial. Hence, $\mathfrak{\widehat E}_6$ is not planar.

(\romannumeral 3)~~  In the case of the DO polynomial $f(X)= \mathfrak{\widehat E}_{10}  = 2X^{10}+a^4X^6+a^6X^4$, consider the difference function $\Delta_{f} (X,Y)=XY~h(X,Y),$ where $h(X,Y)=2(X^8+Y^8)-a^4X^2Y^2 +a^6(X^2+Y^2).$ The Magma algebra package \cite{magma} reveals that $h(X,Y)$ is absolutely irreducible. Therefore, by Lemma \ref{weil bound}, the number $N_h$ of solutions $(u,v) \in \mathbb{F}_q \times \mathbb{F}_q$ of $h(X,Y)=0$ satisfies $N_h \geq q-42\sqrt{q}-9.$ Now $h(X,0)= 2X^8+a^6X^2 = X^2(a+X)^3(a-X)^3$ have in total $8$ solutions in $\mathbb{F}_q$. Similarly, $h(0,Y)=2Y^8a+a^6Y^2=Y^2(a-Y)^3(a+Y)^3$ have in total $8$ solutions in $\mathbb{F}_q$. Therefore, in total $16$ solutions can be obtained either by putting $X=0$ or $Y=0$. Now if $q-42\sqrt{q}-9>16$, i.e., $q-42\sqrt{q}-25>0$ then $h(X,Y)$ possesses a solution $(u,v)\in \mathbb{F}_q^* \times \mathbb{F}_q^*$. This is true for $e\geq 7$, therefore, for $e\geq 7$, $\mathfrak{\widehat E}_{10}$ is not planar. For $e=1$ $\mathfrak{\widehat E}_{10}=X^2 \pmod {X^3-X}$, which is clearly a planar polynomial. Computations show that for $2 \leq e \leq 6$, the cardinality of the image set of $\mathfrak{\widehat E}_{10}$ on $\mathbb{F}_q^*$ is strictly less than $(3^e-1)/2$. Therefore, by Lemma \ref{l9.1}, $\mathfrak{\widehat E}_{10}$ is not planar in these cases.

(\romannumeral 4)~~  Consider the DO polynomial $f(X)=\mathfrak{\widehat E}_{15} =aX^{28}+2a^9X^{12}+a^{13}X^4 = a(X^7+2a^8X^3+a^{12}X) \circ X^4.$ 
This polynomial is never planar over $\mathbb{F}_{3^e}$ when $e$ is even. Since in this case $4\mid (q-1)$, the cardinality of image set of $f(X)$ on $\mathbb{F}_{q}^*$ is at most $(q-1)/4$ and thus, by Lemma~\ref{l9.1}, $f(X)$ is not planar. When $e$ is odd, we consider the difference function $\Delta_{f}(X,Y) = aXY(X^2+Y^2)~h(X,Y),$ where
$$ h(X,Y) = a^{12}+\sum_{i=0}^{12}(-1)^iX^{24-2i}Y^{2i}+\sum_{i=1}^{3}(-1)^ia^8X^{8-2i}Y^{2i}.$$ Again, the Magma algebra package \cite{magma} shows that the polynomial $h'(X,Y)$ obtained from $h(X,Y)$ by putting $a=1$, is absolutely irreducible. Therefore, by Lemma \ref{weil bound}, the number $N_{h'}$ of solutions $(u,v) \in \mathbb{F}_q \times \mathbb{F}_q$ of $h'(X,Y)=0$ satisfies $N_{h'} \geq q-506\sqrt{q}-25$. Also, $h'(X,0) = X^{24}+1$ and this has no root in odd degree extensions of $\mathbb{F}_3$. Similarly, $h'(0,Y)= Y^{24}+1$ has no root in odd degree extensions of $\mathbb{F}_3$. Therefore, there is no solution to $h'(X,Y)=0$ corresponding to $XY=0$. If $ q-506\sqrt{q}-25 > 0$, then $h'(X,Y)$ has a root $(u,v)\in \mathbb{F}_q^* \times \mathbb{F}_q^*.$ This holds true for all $e \geq 12$. Therefore, $\mathfrak{\widehat E}_{15}$ is not planar over $\mathbb F_{3^e}$ for $e \geq 12$. In the case $e=1$, the polynomial $f(X) = \mathfrak{\widehat E}_{15} = aX^2 \pmod{X^3-X}$ which is clearly a planar function. Computations show that for $e = 5, 7, 9, 11$, the cardinality of the image set of $\mathfrak{\widehat E}_{15}$ on $\mathbb F_{3^e}^{*}$ is strictly less than $(3^e-1)/2$, therefore, $\mathfrak{\widehat E}_{15}$ is not planar in these cases. In the case $e=3$, $\mathfrak{\widehat E}_{15}$ is planar for every choice of $a\in \mathbb{F}_{27}^{*}$.

\textbf{Case 2.} Let $p=5$. The planarity of DO monomials $\mathfrak{\widehat D}_2 = 3X^{5^{\alpha}+1}$, $\mathfrak{\widehat D}_3 = 2aX^{5^{\alpha}+1}$, $\mathfrak{\widehat E}_2 = 4X^{5^{\alpha}+1}$, $\mathfrak{\widehat E}_3 = 3aX^{5^{\alpha}+1}$, $\mathfrak{\widehat G}_2 = X^{5^{\alpha}+1}$, $\mathfrak{\widehat H}_2 = 2X^{5^{\alpha}+1}$, $\mathfrak{\widehat H}_3 = aX^{5^{\alpha}+1}$, and $\mathfrak{\widehat H}_4 = 3X^{5^{\alpha}+1}$ is well-known by \cite[Theorem 3.3]{CMLB} and these monomials are planar over $\mathbb{F}_{5^e}$ whenever $e/(\alpha, e)$ is odd. It is straightforward to see that $X=a$ is a root of the DO binomial $\mathfrak{\widehat E}_7 = 4a^5X^2+aX^6$ and hence, it is not planar. Now we consider the planarity of the rest of the DO polynomials one by one.

(\romannumeral 1)~~  For the DO binomial $f(X) = \mathfrak{\widehat G}_6 = 2a^4X^2+X^6$, consider the difference function $\Delta_{f}(X,Y) = XYB(X,Y),$ where $B(X,Y)=X^4+Y^4-a^4$. It is easy to see that $Y-a\mid Y^4-a^4$ and $Y^4-a^4$ has no repeated roots. Therefore, by Eisenstein's criterion, $B(X,Y)$ is absolutely irreducible. Thus, by Lemma \ref{weil bound}, the number of solutions $(u,v)\in \mathbb{F}_q \times \mathbb{F}_q$ of $B(X,Y)=0$ satisfies $ N_B\geq q-6\sqrt{q}-5.$ Now, at most $8$ roots of $B(X,Y)$ can be obtained by putting either $X=0$ or $Y=0$. Therefore, if $q-6\sqrt{q}-5 > 8$, $B(X,Y)$ will have a solution $(u,v)\in \mathbb{F}_{q}^*\times \mathbb{F}_{q}^*$, which holds for all $e\geq3$. Therefore, $\mathfrak{\widehat G}_6$ is not planar over $\mathbb F_{5^e}$ for $e\geq3$. When $e=1$, $f(X)=3X^2(\mbox{mod}~(X^5-X))$ which is clearly a planar function. For $e=2$, the number of solutions of the equation $X^4+Y^4=a^4$ in $\mathbb{F}_{5^2} \times \mathbb{F}_{5^2}$ is 40, which is greater than 16. Therefore, $\mathfrak{\widehat G}_6$ is not planar in this case.

(\romannumeral 2)~~ In the case of the DO trinomial $f(X) = \mathfrak{\widehat G}_{11} = -aX^{10}+a^5X^6+2a^9X^2,$ consider the difference function $\Delta_{f} (X,Y)=XY~h(X,Y),$ where $h(X,Y)=3aX^4Y^4+a^5X^4 + a^5Y^4 +4a^9.$ The Magma algebra package \cite{magma} shows that $h(X,Y)$ is absolute irreducible. Therefore, by Lemma \ref{weil bound}, the number $N_h$ of solutions $(u,v) \in \mathbb{F}_q \times \mathbb{F}_q$ of $h(X,Y)=0$ satisfies $N_h \geq q-42\sqrt{q}-9$. Now, $h(X,0)= X^4-a^4 = 0$ can have at most $4$ solutions. Similarly, $h(0,Y)= Y^4-a^4 = 0$ can have at most $4$ solutions. Therefore, at most $8$ solutions can be obtained by putting either $X=0$ or $Y=0$. Now, if $q-42\sqrt{q}-9>8,$ i.e., $q-42\sqrt{q}-17>0$ then $h(X,Y)$ will have a solution $(u,v)\in \mathbb{F}_q^* \times \mathbb{F}_q^*$. This is true for $e\geq 5$, therefore, for $e\geq 5$, $\mathfrak{\widehat G}_{11}$ is not planar. For $e=1$, $\mathfrak{\widehat G}_{11}= 2aX^2$ is clearly a planar function. For $e=2,4$, computations show that the cardinality of the image set of $\mathfrak{\widehat G}_{11}$ on $\mathbb F_{5^e}^{*}$ is strictly less than $(5^e-1)/2$. Therefore, $\mathfrak{\widehat G}_{11}$ is not planar in these cases. For $e=3$, computations show that $\mathfrak{\widehat G}_{11}$ is planar for every choice of $a \in \mathbb{F}_{125}^*$.

\textbf{Case 3.} Let $p > 5$. In this case, the only DO polynomials we are getting are the monomials of the form $bX^{p^{\alpha}+1}$ where $b \in \mathbb{F}_q^*$  and by \cite[Theorem 3.3]{CMLB}, these monomials are planar over $\mathbb{F}_{p^e}$ whenever $e/(\alpha,e)$ is odd.

In view of the foregoing discussion, the following theorem gives the list of planar DO polynomials arising from RDPs of arbitrary kind.

\begin{thm}  
Let $\displaystyle \mathfrak{\widehat D}_{k,m} = \sum_{i=1}^{\lfloor\frac k2\rfloor}\frac{k-mi}{k-i}\dbinom{k-i}{i}(-X^d)^{i}a^{k-2i}$ as defined in the Introduction. Then the following are the only planar DO polynomials arising from $\mathfrak{\widehat D}_{k,m}.$
\begin{enumerate} [{\normalfont (i)}]
 \item $X^2$ over $\mathbb{F}_{p^e}$.
 \item $X^{p^\alpha +1}$ over $\mathbb{F}_{p^e}$ with $\frac{e}{(\alpha,e)}$ odd.
 \item $2a^9X^{12}+a^{13}X^4+aX^2$ over $\mathbb{F}_{27}$ with $a\in \mathbb{F}_{27}^*$.
 \item $-aX^{10}+a^5X^6+2a^9X^2$ over $\mathbb{F}_{125}$ with $a\in \mathbb{F}_{125}^*$.
\end{enumerate}
\end{thm}
\section*{Acknowledgments}
The authors would like to express their sincere appreciation for the reviewers’ careful
reading, beneficial comments and suggestions, and to the editors for the prompt handling of our paper. The research of Sartaj Ul Hasan is partially supported by Start-up Research Grant SRG/2019/000295 from the Science and Engineering Research Board, Government of India

\appendix 
\section{The complete list of DO polynomials} \label{AA}
Here, we present the complete list of DO polynomials obtained from polynomial $\mathfrak{\widehat D}_{k,m}$ over a finite field of odd characteristic.
 
 \begin{enumerate}
  \item The case $p=3$.
 \begin{enumerate}
  \item When $m \equiv 0\pmod{3}$
  \begin{enumerate}
 \item $k=2\cdot3^{\ell}$, $X^{3^{n+\ell}(3^{\alpha}+1)}$ for nonnegative integers $\alpha$, $n$ and $\ell$.
 \item $k=4\cdot3^\ell$, $2a^2X^{2\cdot3^{n+\ell}}+2X^{4\cdot3^{n+\ell}}$ for nonnegative integers $n$ and $\ell$.
 \item $k=5\cdot3^\ell$, $a^3X^{2\cdot3^{n+\ell}}+2aX^{4\cdot3^{n+\ell}}$ for nonnegative integers $n$ and $\ell$.
 \item $k=7\cdot3^\ell$, $2a^5X^{2\cdot3^{n+\ell}}+2a^3X^{4\cdot3^{n+\ell}}+2aX^{2\cdot3^{n+\ell+1}}$ for nonnegative integers $n$ and $\ell$.
  \end{enumerate}
  \item When $m \equiv 1\pmod{3}$
  \begin{enumerate}
   \item $k=2$, $2X^{3^n(3^{\alpha}+1)}$ for nonnegative integers $\alpha$ and $n$.
   \item $k=3$, $aX^{3^{n}(3^{\alpha}+1)}$ for nonnegative integers $\alpha$ and $n$.
   \item $k=4$, $X^{3^{n}(3^{\alpha}+1)}$ for nonnegative integers $\alpha$ and $n$.
   \item $k=5$, $2a^3X^{3^n(3^{\alpha}+1)}$ for nonnegative integers $\alpha$ and $n$.
   \item $k=6$, $a^4X^{3^n(3^{\alpha}+1)}+2X^{3^{n+1}(3^{\alpha}+1)}$ for nonnegative integers $\alpha$ and $n$. 
   \item $k=7$, $a^3X^{4\cdot3^{n}}+2aX^{2\cdot3^{n+1}}$ for nonnegative integer $n$.
   \item $k=10$, $a^6X^{4\cdot3^{n}}+a^4X^{2\cdot3^{n+1}}+2X^{10\cdot3^{n}}$ for nonnegative integer $n$.
   \item $k=13$, $a^9X^{4\cdot3^{n}}+a^3X^{10\cdot3^{n}}+aX^{4\cdot3^{n+1}}$ for nonnegative integer $n$.
   \item $k=15$, $a^{13}X^{4\cdot3^{n}}+2a^9X^{4\cdot3^{n+1}}+aX^{28\cdot3^{n}}$ for nonnegative integer $n$.
   \item $k=19$, $a^{15}X^{4\cdot3^{n}}+a^{13}X^{2\cdot3^{n+1}}+2a^9X^{10\cdot3^{n}}+2aX^{2\cdot3^{n+2}}$ for nonnegative integer $n$.
  \end{enumerate}
  \item When $m \equiv 2\pmod{3}$
   \begin{enumerate}
   \item $k=3$, $2aX^{3^n(3^{\alpha}+1)}$ for nonnegative integers $\alpha$ and $n$.
   \item $k=4$, $a^2X^{3^{n}(3^{\alpha}+1)}$ for nonnegative integers $\alpha$ and $n$.
   \item $k=5$, $aX^{3^{n}(3^{\alpha}+1)}$ for nonnegative integers $\alpha$ and $n$.
   \item $k=6$, $2a^4X^{3^n(3^{\alpha}+1)}$ for nonnegative integers $\alpha$ and $n$.
   \item $k=7$, $a^5X^{3^n(3^{\alpha}+1)}+2aX^{3^{n+1}(3^{\alpha}+1)}$ for nonnegative integers $\alpha$ and $n$. 
   \item $k=8$, $a^4X^{4\cdot3^{n}}+2a^2X^{2\cdot3^{n+1}}$ for nonnegative integer $n$.
   \item $k=11$ $a^7X^{4\cdot3^{n}}+a^5X^{2\cdot3^{n+1}}+2aX^{10\cdot3^{n}}$ for nonnegative integer $n$.
   \item $k=14$, $a^{10}X^{4\cdot3^{n}}+a^4X^{10\cdot3^{n}}+a^2X^{4\cdot3^{n+1}}$ for nonnegative integer $n$.
   \item $k=16$, $a^{14}X^{4\cdot3^{n}}+2a^{10}X^{4\cdot3^{n+1}}+a^2X^{28\cdot3^{n}}$ for nonnegative integer $n$.
   \item $k=20$, $a^{16}X^{4\cdot3^{n}}+a^{14}X^{2\cdot3^{n+1}}+2a^{10}X^{10\cdot3^{n}}+2a^2X^{2\cdot3^{n+2}}$ for nonnegative integer $n$.
  \end{enumerate}
 \end{enumerate}
 \item The case $p=5$.
  \begin{enumerate}
  \item When $m \equiv 0\pmod{5}$
  \begin{enumerate}
   \item $k=2\cdot5^\ell$, $3X^{5^{n+\ell}(5^{\alpha}+1)}$ for non negative integers $\alpha$, $n$ and $\ell$.
   \item $k=3\cdot5^\ell$, $2aX^{5^{n+\ell}(5^{\alpha}+1)}$ for non negative integers $\alpha$, $n$ and $\ell$.
  \end{enumerate}
  \item When $m \equiv 1\pmod{5}$
  \begin{enumerate}
   \item $k=2$, $4X^{5^n(5^{\alpha}+1)}$ for nonnegative integers $\alpha$ and $n$.
   \item $k=3$, $3aX^{5^{n}(5^{\alpha}+1)}$ for nonnegative integers $\alpha$ and $n$.
   \item $k=7$, $4a^5X^{2\cdot5^{n}}+aX^{6\cdot5^{n}}$ for nonnegative integer $n$.
   \end{enumerate}
  \item When $m \equiv 2\pmod{5}$
  \begin{enumerate}
   \item $k=3$, $4aX^{5^n(5^{\alpha}+1)}$ for nonnegative integers $\alpha$ and $n$.
   \item $k=4$, $3a^2X^{5^{n}(5^{\alpha}+1)}$ for nonnegative integer $n$.
   \item $k=8$, $4a^6X^{2\cdot5^{n}}+a^2X^{6\cdot5^{n}}$ for nonnegative integer $n$.
  \end{enumerate}
  \item When $m \equiv 3\pmod{5}$
  \begin{enumerate}
  \item $k=2$, $2X^{5^n(5^{\alpha}+1)}$ for nonnegative integers $\alpha$ and $n$.
  \item $k=6$, $2a^4X^{2\cdot5^{n}}+X^{6\cdot5^{n}}$ for nonnegative integer $n$.
  \item $k=11$, $2a^9X^{2\cdot5^{n}}+a^5X^{6\cdot5^{n}}+4aX^{2\cdot5^{n+1}}$ for nonnegative integer $n$.
  \end{enumerate}
  \item When $m \equiv 4\pmod{5}$
  \begin{enumerate}
   \item $k=2$, $2X^{5^n(5^{\alpha}+1)}$ for nonnegative integers $\alpha$ and $n$.
   \item $k=3$, $aX^{5^{n}(5^{\alpha}+1)}$ for nonnegative integer $n$.
   \item $k=4$, $3X^{5^{n}(5^{\alpha}+1)}$ for nonnegative integer $n$.
  \end{enumerate}
 \end{enumerate}
 \item The case $p>5$.\\
  In this case, we are getting DO polynomials of the form $bX^{p^{\alpha}+1}$ where $b\in \mathbb{F}_q^*$.
  
\end{enumerate} 

\end{document}